\newtheorem{thm}{Theorem}
\newtheorem{lem}[thm]{Lemma}
\newtheorem{prop}[thm]{Proposition}
\newtheorem{prob}{Problem}
\newtheorem{defn}[thm]{Definition}
\theoremstyle{definition} 
\newtheorem{rem}[thm]{Remark}
\begin{document}

\begin{frontmatter}



\title{{\bf A Suboptimality Approach to Distributed $\mathcal{H}_2$ Control\\ by Dynamic Output Feedback}
}

\author{Junjie Jiao}\ead{j.jiao@rug.nl} 
\author{Harry L. Trentelman}\ead{h.l.trentelman@rug.nl} 
\author{M. Kanat Camlibel}\ead{m.k.camlibel@rug.nl}

\address{Bernoulli Institute for Mathematics, Computer Science and Artificial Intelligence, University of Groningen, 9700 AK Groningen, The Netherlands}

\begin{keyword}
	distributed $\mathcal{H}_2$ optimal control, dynamic protocols, suboptimal control, dynamic output feedback
	
	
	
\end{keyword}

\begin{abstract}
This paper deals with suboptimal distributed $\mathcal{H}_2$ control by dynamic output feedback for homogeneous linear multi-agent systems.
Given a linear multi-agent system, together with an associated $\mathcal{H}_2$ cost functional, the objective is to design  dynamic output feedback protocols that guarantee the associated cost to be smaller than an a priori given upper bound while synchronizing the controlled network.
A design method is provided to compute such protocols. 
%
The computation of the two local gains in these protocols involves two Riccati inequalities, each of dimension equal to the dimension of the state space of the agents. The largest and smallest nonzero eigenvalue of the Laplacian matrix of the network graph are also used in the computation of one of the two local gains.
A simulation example is provided to illustrate the performance of the proposed  protocols.
\end{abstract}

\end{frontmatter}


\section{Introduction}\label{sec_intro}
The design of distributed protocols for networked multi-agent systems has been one of the most active research topics in the field of systems and control over the last two decades, see e.g. \cite{Olfati-Saber2004} or  \cite{6303906}. 
This is partly due to the broad range of applications of multi-agent systems, e.g. smart grids \cite{Doerfler2013},  formation control \cite{Oh2015}, \cite{YANG2019120}, and intelligent transportation systems \cite{Bart2017}. 
One of the challenging problems in the context of linear multi-agent systems is the problem of  developing distributed protocols to minimize given quadratic cost criteria while the agents reach a common goal, e.g., synchronization.
Due to the structural constraints that are imposed on the control laws by the  communication topology, such optimal control problems are difficult to solve. 
These structural constraints make distributed optimal control problems non-convex, and it is unclear under what conditions optimal solutions exist in general.

In the existing literature, many efforts have been devoted to addressing distributed linear quadratic optimal control problems.
In \cite{tamas2008},  suboptimal distributed stabilizing controllers were computed to stabilize multi-agent networks with identical agent dynamics subject to a global linear quadratic cost functional.  
For a network of agents with single integrator dynamics, an explicit expression for the optimal gain was given in \cite{wei_ren2010}, see also \cite{Jiao2019local}.
In \cite{kristian2014} and \cite{huaguang_zhang2015}, a distributed linear quadratic control problem was dealt with using an inverse optimality approach. 
This approach was further employed in \cite{nguyen_2017} to design reduced order controllers.
%
Recently, also in \cite{Jiao2018}, the suboptimal distributed LQ problem was considered.  
 In parallel to the above, much work has been put into the problem of distributed $\mathcal{H}_2$ optimal control.
Given a particular  global $\mathcal{H}_2$ cost functional,  \cite{LI2011797} and \cite{Zhongkui2014} proposed suboptimal distributed stabilizing protocols involving static state  feedback for multi-agent systems with undirected graphs. 
Later on, in  \cite{Wang2014}  these results  were generalized  to  directed graphs.
For a given $\mathcal{H}_2$ cost criterion that penalizes the weighted differences between the outputs of the communicating agents, in \cite{JIAO2018154} a suboptimal distributed synchronizing protocol based on static relative state feedback was established.  

In the past, also the design of structured controllers for large-scale systems has attracted much attention. 
In \cite{Rotkowitz2006}, the notion of quadratic invariance was adopted to develop decentralized controllers that minimize the performance of the feedback system with constraints on the controller structure.
 In \cite{Lin2013}, the so called alternating direction method of multipliers was adopted to design sparse feedback gains that minimize an $\mathcal{H}_2$ performance. 
In \cite{8344761}, conditions were provided under which, for a given optimal centralized controller, a suboptimal distributed controller exists so that the resulting closed loop state and input trajectories are close in a certain sense. 

%
%
%
The distributed $\mathcal{H}_2$  optimal control problem for multi-agent systems by dynamic output feedback is to find an optimal distributed dynamic protocol that achieves synchronization for the controlled network and that minimizes the $\mathcal{H}_2$ cost functional. 
This problem, however, is a non-convex optimization problem, and therefore it is
unclear whether such optimal protocol exists, or whether a closed form solution can be given.
Therefore, in the present paper, we look at an alternative version of this problem that requires only {\em suboptimality}.
More precisely, we extend our preliminary results from \cite{JIAO2018154} on static relative state feedback to the general case of dynamic protocols using relative measurement outputs.  
The main contributions of this paper are the following.
\begin{enumerate}[1)]
	\item We solve the open problem of finding, for a single continuous-time linear system, a separation principle based $\mathcal{H}_2$ suboptimal dynamic output feedback controller. This result extends the recent result in \cite{HAESAERT2018306} on the separation principle in suboptimal $\mathcal{H}_2$ control for discrete-time systems. 
\item  Based on the above result, we provide a method for computing $\mathcal{H}_2$ suboptimal distributed dynamic output feedback protocols for linear multi-agent systems. 
\end{enumerate}
The outline of this paper is  as follows. 
In Section \ref{sec_preliminary}, we will provide some notation and graph theory used throughout this paper. 
In Section \ref{sec_prob}, we will formulate the suboptimal distributed $\mathcal{H}_2$ control problem by dynamic output feedback for linear multi-agent systems.
%
In order to solve this problem, in Section \ref{sec_suboptimal_single_system}, we will first study suboptimal $\mathcal{H}_2$ control by dynamic output feedback for a single linear system. In Section \ref{sec_suboptimal_mas} we will then treat the problem introduced in Section \ref{sec_prob}.  %
To illustrate our method, a simulation example is provided in Section \ref{sec_simulation}.
Finally, Section \ref{sec_conclusion} concludes this paper.

\section{Preliminaries}\label{sec_preliminary}
%
\subsection{Notation}\label{subsec_notations}
In this paper, the field of real numbers is denoted by $\mathbb{R}$ and the space of $n$ dimensional real vectors is denoted by $\mathbb{R}^n$.
We denote by $\mathbf{1}_n\in \mathbb{R}^n$ the vector with all its entries equal to $1$ and we denote by $I_n$ the identity matrix of dimension $n\times n$.
For a  symmetric matrix $P$, we denote $P>0$ if $P$ is positive definite and $P < 0$  if $P$ is negative definite.
The trace of a square matrix $A$ is denoted by ${\rm tr} (A)$.
A matrix is called Hurwitz if all its eigenvalues have negative real parts. 
We denote by $\text{diag}(d_1, d_2, \ldots, d_n)$ the $n \times n$ diagonal matrix with $d_1, d_2,\ldots, d_n$ on the diagonal. For given matrices $M_1,M_2, \ldots, M_n$, we denote by ${\rm blockdiag}(M_1,M_2, \ldots, M_n)$ the block diagonal matrix with diagonal blocks $M_i$.
%
%
The Kronecker product of two matrices $A$ and $B$ is denoted by $A \otimes B$.

\subsection{Graph Theory}\label{subsec_graph}
A directed weighted graph is denoted by $\mathcal{G} = (\mathcal{V}, \mathcal{E},\mathcal{A})$ with node set $\mathcal{V} = \{ 1, 2,\ldots, N \}$ and edge set $\mathcal{E} = \{ e_1, e_2,\ldots, e_M \}$ satisfying $\mathcal{E} \subset \mathcal{V} \times \mathcal{V}$, and where $\mathcal{A} = [a_{ij}]$ is the adjacency matrix with nonnegative elements $a_{ij}$, called the edge weights. 
If $(i,j) \in \mathcal{E}$ we have $a_{ji} >0$. If  $(i,j) \not  \in \mathcal{E}$ we have $a_{ji} =0$. 

A graph is called undirected if $a_{ij}= a_{ji}$ for all $i,j$. It is called simple if $a_{ii} =0$ for all $i$.
A simple undirected graph is called connected if for each pair of nodes $i$ and $j$  there exists a path from $i$ to $j$.
%
%
%
Given a simple undirected weighted graph $\mathcal{G}$, the degree matrix of $\mathcal{G}$ is the diagonal matrix, given by $\mathcal{D} = \text{diag} ( \textsf{d}_1,\textsf{d}_2,\ldots, \textsf{d}_N )$ with $\textsf{d}_{i} = \sum_{j=1}^{N} a_{ij}$.  The Laplacian matrix is defined as ${L} := \mathcal{D} - \mathcal{A}$.
The Laplacian matrix of an undirected graph is symmetric and has only real nonnegative eigenvalues. 
A simple undirected weighted graph is connected if and only if  its Laplacian matrix ${L}$ has a simple eigenvalue at $0$. In that case there exists an orthogonal matrix $U$ such that $U^{\top} {L} U = \Lambda = \text{diag}(0, \lambda_2, \ldots, \lambda_N)$ with $0 = \lambda_1 < \lambda_2 \leq \cdots \leq \lambda_N$.
Throughout this paper, it will be a standing assumption that the communication among the agents of the network is represented by a connected, simple undirected weighted graph.

A simple undirected weighted graph obviously has an even number of edges $M$. Define $K:= \frac{1}{2}M$. For such graph, an associated incidence matrix $R \in \mathbb{R}^{N \times K}$ is defined as a matrix $R = (r_1, r_2, \ldots, r_K)$ with columns $r_k \in \mathbb{R}^{N}$. Each column $r_k$ corresponds to exactly one pair of edges $e_k = \{(i,j), (j,i)\}$, and the  $i$th and $j$th entry of $r_k$ are equal to $\pm1$, while they do not take the same value. The remaining entries of $e_k$ are equal to 0. 
%
We also define the matrix
\begin{equation}\label{W}
	W = \text{diag} ( \textsf{w}_1, \textsf{w}_2,\ldots, \textsf{w}_K)
\end{equation}
as the $K \times K$ diagonal matrix, where $\textsf{w}_k$ is the weight on each of the edges in $e_k$ for $k = 1,2,\ldots,K$.
The relation between the Laplacian matrix and the incidence matrix is captured by ${L} = R W R^{\top}$ \cite{6767074}.

\section{Problem Formulation}\label{sec_prob}
In this paper, we consider a homogeneous multi-agent system consisting of $N$ identical agents, where the underlying network  graph is a connected, simple undirected weighted graph with associated adjacency matrix $\mathcal{A}$ and  Laplacian matrix ${L}$.
The dynamics of the $i$th agent is  represented by a finite-dimensional linear time-invariant system 
\begin{equation}\label{mas_decoupled}
	\begin{aligned} 
		\dot{x}_i & = A x_i  + B u_i  + E d_i,\\
		y_i & = C_1 x_i  + D_1 d_i, \\
		z_i & = C_2 x_i  + D_2 u_i  ,
	\end{aligned}\qquad i = 1,2,\ldots,N,
\end{equation}
where $x_i \in \mathbb{R}^n$ is the state, $u_i \in \mathbb{R}^m$ is the coupling input, $d_i \in \mathbb{R}^q$ is an unknown external disturbance, $y_i \in \mathbb{R}^r$ is the measured output and $z_i \in \mathbb{R}^p$ is the output to be controlled.
The matrices $A$, $B$, $C_1$, $D_1$, $C_2$, $D_2$ and $E$ are of compatible dimensions.
%
%
Throughout this paper we assume that the pair $(A, B)$ is stabilizable and  the pair $(C_1, A)$ is detectable.
%
The agents \eqref{mas_decoupled} are to be interconnected by means of a dynamic output feedback protocol.
Following \cite{harry_2013} and \cite{Fan2016}, we consider  observer based dynamic  protocols of the form
\begin{equation}\label{dyna_protocol_1}
	\begin{aligned}
		\dot{w}_i & = A w_i + B \sum_{j=1}^N a_{ij} (u_i - u_j) \\
		&\qquad  + G \left( \sum_{j=1}^N  a_{ij}(y_i -y_j) -C_1 w_i \right),\\
		u_i & = F w_i,\quad i =1,2, \ldots, N,
	\end{aligned}
\end{equation}
where $G \in \mathbb{R}^{n \times r}$ and $F \in \mathbb{R}^{m \times n}$ are  local gains to be designed.
We briefly explain the structure of this protocol.
Each local controller of the protocol \eqref{dyna_protocol_1} observes the weighted sum of the relative input signals $\sum_{j=1}^N  a_{ij} (u_i - u_j)$ and the weighted sum of the disagreements between the measured output signals $\sum_{j=1}^N  a_{ij}(y_i -y_j)$.
The first equation in \eqref{dyna_protocol_1} in fact represents an asymptotic observer for the weighted sum of the relative states of agent $i$, and the state of this observer is an estimate of this value.
Note that, for the error $e_i := w_i - \sum_{j=1}^N  a_{ij} (x_i - x_j)$, the error dynamics is $\dot{e}_i = (A - GC_1) e_i + \sum_{j=1}^N  a_{ij} (G D_1 - E) (d_i - d_j)$.
An estimate of the weighted sum of the  relative states of each agent is then fed back to this agent using a static gain. 

Denote by
$
\textbf{x} = (x_1^{\top}, x_2^{\top}, \ldots, x_N^{\top})^{\top}
$
 the  aggregate  state vector
and likewise define $\textbf{u}$, $\textbf{y}$, $\textbf{z}$, $\textbf{d}$ and $\textbf{w}$.
The multi-agent system \eqref{mas_decoupled} can then be written in compact form as
\begin{equation}\label{compact_mas}
	\begin{aligned}  
		\dot{\textbf{x}} & = (I_N \otimes A) \textbf{x} +(I_N \otimes B) \textbf{u} + (I_N \otimes E) \textbf{d},\\
		\textbf{y} &= (I_N \otimes C_1) \textbf{x}+ (I_N \otimes D_1) \textbf{d}, \\
		\textbf{z} &= (I_N \otimes C_2)\textbf{x} + (I_N \otimes D_2) \textbf{u},
	\end{aligned}
\end{equation}
and the dynamic protocol \eqref{dyna_protocol_1} is represented by
\begin{equation}\label{compact_observer}
	\begin{aligned}
		\dot{\textbf{w}} & = \left( I_N \otimes (A - G C_1) + {L} \otimes B F \right) \textbf{w}  + ({L} \otimes G ) \textbf{y},\\
		\textbf{u} & =(I_N \otimes F) \textbf{w}.
	\end{aligned}
\end{equation}
By interconnecting the network \eqref{compact_mas} using the dynamic protocol \eqref{compact_observer}, we obtain the controlled network
	\begin{align}
		\begin{pmatrix}
			\dot{\textbf{x}} \\
			\dot{\textbf{w}}
		\end{pmatrix} 
		&=
		\begin{pmatrix}
			I_N \otimes A & I_N \otimes BF \\
			{L} \otimes GC_1 & I_N \otimes(A - G C_1) + {L} \otimes BF
		\end{pmatrix}
		\begin{pmatrix}
			\textbf{x}\\
			\textbf{w}
		\end{pmatrix} \nonumber \\
		& \qquad  +
		\begin{pmatrix}
			I_N \otimes E \\
			{L} \otimes GD_1
		\end{pmatrix} \textbf{d}
		,\label{mas_compact_1}
		\\
		\textbf{z} &= 
		\begin{pmatrix}
			I_N \otimes C_2 & I_N \otimes D_2 F	
		\end{pmatrix}
		\begin{pmatrix}
			\textbf{x}\\
			\textbf{w}
		\end{pmatrix}.\label{outputz}
	\end{align}
Foremost, we want the dynamic protocol \eqref{compact_observer} to achieve synchronization for the network.
\begin{defn}
	The protocol \eqref{compact_observer} is said to  synchronize the network if, whenever the external disturbances of all agents are equal to zero,  i.e. $\textbf{d}={0}$,  we have $x_i(t) - x_j(t) \to 0$  and $w_i(t) - w_j(t) \to 0$ as $t \to \infty$,  for all $i, j = 1, 2, \ldots, N$.
\end{defn}

The  distributed $\mathcal{H}_2$ optimal control problem by dynamic output feedback is to minimize a given global $\mathcal{H}_2$ cost functional over all dynamic protocols of the form \eqref{compact_observer} that achieve synchronization for the  controlled network. 
In the context of distributed control for multi-agent systems, we are interested in the differences of the state and output values of the agents in the controlled network, see e.g. \cite{HiddeJan2018}, \cite{6767074}.
Note that these differences are captured by the incidence matrix $R$ of the underlying graph.
Therefore, we introduce a new output variable as 
$ 
	\boldsymbol{\zeta} = (W^{\frac{1}{2}} R^{\top} \otimes I_p )\textbf{z}
$ 
with $\boldsymbol{\zeta} = (\zeta_1^{\top}, \zeta_2^{\top}, \ldots, \zeta_M^{\top})^{\top} \in \mathbb{R}^{pM}$, where $W$ is the weight matrix of the underlying graph, as defined in \eqref{W}. Thus, the output $\boldsymbol{\zeta}$ is the vector of weighted disagreements between the outputs of the agents, in which the  weights are given by the square roots of the edge weights connecting these agents.
%
Subsequently, we consider the network \eqref{mas_compact_1} with this new output:
	\begin{equation} 
		\boldsymbol{\zeta}  = 
		\begin{pmatrix}
			W^{\frac{1}{2}} R^{\top} \otimes C_2 & W^{\frac{1}{2}} R^{\top} \otimes D_2 F	
		\end{pmatrix}
		\begin{pmatrix}
			\textbf{x}\\
			\textbf{w}
		\end{pmatrix}.\label{outputzeta}
	\end{equation} 
Denote
\begin{equation*}
	\begin{aligned}
		{A}_e & =	
		\begin{pmatrix}
			I_N \otimes A & I_N \otimes BF \\
			{L} \otimes GC_1 & I_N \otimes(A - G C_1) + {L} \otimes BF
		\end{pmatrix},
		\\
		{C}_e & = 	\begin{pmatrix}
			W^{\frac{1}{2}} R^{\top} \otimes C_2 & W^{\frac{1}{2}} R^{\top} \otimes D_2 F	
		\end{pmatrix}, 
			{E}_e   = 	
	\begin{pmatrix}
	I_N \otimes E \\
	{L} \otimes GD_1
	\end{pmatrix}.
	\end{aligned}
\end{equation*}
The impulse response matrix from the external disturbance $\textbf{d}$ to the output $\boldsymbol{\zeta}$ is then equal to
\begin{equation} \label{impresponse}
T_{F,G}(t) = {C}_e e^{{A}_et}{E}_e.
\end{equation}
Next, the associated global $\mathcal{H}_2$ cost functional is defined to be the squared $\mathcal{L}_2$-norm of the closed loop impulse response, and is given by
\begin{equation}\label{cost_FG}
	J(F,G) := \int_{0}^{\infty} \text{tr}\left[ T_{F,G}^{\top}(t) T_{F,G}(t)\right] dt.
\end{equation}
The distributed $\mathcal{H}_2$ optimal  control problem by dynamic output feedback is the problem of  minimizing  \eqref{cost_FG}  over all dynamic protocols of the form \eqref{compact_observer} that achieve synchronization for the network.
Unfortunately, due to the particular form of the protocol \eqref{compact_observer}, this optimization problem is, in general, non-convex and difficult to solve, and a closed form solution has not been provided in the literature up to now. 
Therefore, instead of trying to find an optimal solution, in this paper we will address a suboptimality version of the problem. 
More specifically, we will design synchronizing dynamic protocols \eqref{compact_observer} that guarantee the associated cost \eqref{cost_FG} to be smaller than an a priori given upper bound.
More concretely, the problem that we will address is the following:
\begin{prob}\label{prob1}
	%
	Let $\gamma > 0$ be a given tolerance. 
	Design local gains $F\in \mathbb{R}^{m \times n}$ and $G\in \mathbb{R}^{n \times r}$ such that the dynamic protocol \eqref{compact_observer}  achieves $J(F, G) < \gamma$ and synchronizes the network.
\end{prob}

Before we address Problem \ref{prob1}, we will first study the suboptimal $\mathcal{H}_2$ control problem by dynamic output feedback for a single  linear system.
In that way, we will collect the required preliminary results to treat the actual suboptimal distributed $\mathcal{H}_2$ control problem for multi-agent systems.

\section{Suboptimal $\mathcal{H}_2$ Control  by Dynamic Output Feedback for Linear Systems}\label{sec_suboptimal_single_system}
In this section, we will discuss the suboptimal $\mathcal{H}_2$ control problem by dynamic output feedback for a single linear system. This problem has been dealt with before, see e.g. \cite{Scherer2000}, \cite{Scherer1997}, \cite{algebraic_1997} or \cite{HAESAERT2018306}. In particular, in \cite{HAESAERT2018306}, the {\em separation principle} for suboptimal $\mathcal{H}_2$ control for discrete-time linear systems was established. Here, we will establish the analogue of that result for  the continuous-time case. 

Consider the linear system
\begin{equation}\label{sys_xyz}
	\begin{aligned} 
		\dot{x} & = \bar{A} x + \bar{B} u + \bar{E} d,\\
		y &= \bar{C}_1 x + \bar{D}_1 d, \\
		z &= \bar{C}_2 x + \bar{D}_2 u,
	\end{aligned}
\end{equation}
where  $x \in \mathbb{R}^n$ is the state, $u \in \mathbb{R}^m$ the control input, $d\in \mathbb{R}^q$ an unknown external disturbance, $y \in \mathbb{R}^r$ the measured output, and $z \in \mathbb{R}^p$ the output to be controlled. 
The matrices $\bar{A}$, $\bar{B}$, $\bar{C}_1$, $\bar{D}_1$, $\bar{C}_2$, $\bar{D}_2$ and $\bar{E}$ have compatible dimensions. 
In this section, we assume that the pair $(\bar{A}, \bar{B})$ is stabilizable and that the pair $(\bar{C}_1, \bar{A})$ is detectable.
Moreover, we consider dynamic output feedback controllers of the form
\begin{equation}\label{dyna_w}
	\begin{aligned}
		\dot{w} &= \bar{A} w + \bar{B} u + G \left(y -\bar{C}_1 w\right), \\
		u &= F w,
	\end{aligned}
\end{equation}
where $w \in \mathbb{R}^n$ is the state of the controller, and $F \in \mathbb{R}^{m \times n}$ and $G \in \mathbb{R}^{n \times r}$ are gain matrices to be designed.
By interconnecting the controller \eqref{dyna_w} and the system \eqref{sys_xyz}, we obtain the controlled system 
\begin{equation}\label{sys_dyna_w}
	\begin{aligned}
		\begin{pmatrix}
			\dot{x}\\
			\dot{w}
		\end{pmatrix} 
		& =
		\begin{pmatrix}
			\bar{A} & \bar{B}F \\
			G \bar{C}_1 & \bar{A} +  \bar{B}F -G \bar{C}_1  
		\end{pmatrix} 
		\begin{pmatrix}
			x \\
			w
		\end{pmatrix}
		+
		\begin{pmatrix}
			\bar{E} \\
			G \bar{D}_1	
		\end{pmatrix}d ,
		\\
		z & =
		\begin{pmatrix}
			\bar{C}_2 & \bar{D}_2 F 
		\end{pmatrix}
		\begin{pmatrix}
			x \\
			w
		\end{pmatrix}.
	\end{aligned} 
\end{equation}
Denote 
\begin{align*}
	{A}_a & =  	
	\begin{pmatrix}
		\bar{A} & \bar{B}F \\
		G \bar{C}_1 & \bar{A} + \bar{B}F -G \bar{C}_1 
	\end{pmatrix} ,\\
	{C}_a  &= 
	\begin{pmatrix}
		\bar{C}_2 & \bar{D}_2 F 
	\end{pmatrix}, \\
	{E}_a  & =
	\begin{pmatrix}
		\bar{E} \\
		G \bar{D}_1	
	\end{pmatrix}.
\end{align*}
Then the impulse response matrix  from the disturbance $d$ to the output $z$ is given by ${T}_{F,G}(t) = {C}_a e^{{A}_a t}{E}_a$.
Next, we introduce the associated $\mathcal{H}_2$ cost functional, given by
\begin{equation}\label{cost_dyna}
	J(F,G)  := \int_{0}^{\infty} 
	\text{tr} \left[ {T}_{F,G}^{\top}(t) {T}_{F,G}(t) \right] dt.
\end{equation}
We are interested in the problem of finding a controller of the form \eqref{dyna_w} such that  the controlled system \eqref{sys_dyna_w} is internally stable and  the associated cost \eqref{cost_dyna} is smaller than an a priori given upper bound. 

%

Before we proceed, we first review a well-known result that provides {\em necessary and sufficient} conditions such that a closed loop system is  $\mathcal{H}_2$ suboptimal, see e.g. \cite[Proposition 3.13]{Scherer2000}.
\begin{prop}\label{prop_Ae} 
	%
	Let $\gamma >0$.
	%
	%
	Then the following statements are equivalent:
	\begin{enumerate}[(i)]
		\item the system \eqref{sys_dyna_w} is internally stable and  $J(F, G)  <\gamma$.
		\item \label{case2} there exists $X_a >0$ such that 
		\begin{equation}\label{prop_ineq_1}
			\begin{aligned}
				{A}_a X_a + X_a {A}_a^\top + {E}_a {E}_a^\top  &<0,\\
				\text{\em tr}\left({C}_aX_a{C}_a^\top\right)  &< \gamma.
			\end{aligned}
		\end{equation}
		\item \label{case3} there exists $Y_a >0$ such that
		\begin{equation}
			\begin{aligned}
				{A}_a^\top Y_a + Y_a {A}_a + {C}_a^\top {C}_a &<0, \\
				\text{\em tr}\left({E}_a^\top Y_a {E}_a\right) &< \gamma.
			\end{aligned}
		\end{equation} 
	\end{enumerate}	
\end{prop}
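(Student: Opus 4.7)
The plan is to prove (i)$\Leftrightarrow$(ii) in detail; then (i)$\Leftrightarrow$(iii) follows by an entirely analogous argument applied to the dual system (swapping the roles of controllability and observability Gramians). The two central objects are the controllability Gramian $P := \int_0^\infty e^{A_a t} E_a E_a^\top e^{A_a^\top t}\,dt$, which is well-defined precisely when $A_a$ is Hurwitz and in that case is the unique solution of the Lyapunov equation $A_a P + P A_a^\top + E_a E_a^\top = 0$, and the identity $J(F,G) = \mathrm{tr}(C_a P C_a^\top)$, obtained by substituting $T_{F,G}(t) = C_a e^{A_a t} E_a$ into \eqref{cost_dyna} and using cyclicity of the trace.

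For the direction (ii)$\Rightarrow$(i), I would first extract internal stability. The strict inequality $A_a X_a + X_a A_a^\top < 0$ with $X_a > 0$ lets $V(\xi) = \xi^\top X_a^{-1}\xi$ serve as a strict Lyapunov function for $\dot\xi = A_a \xi$, so $A_a$ is Hurwitz. Consequently $P$ is well-defined, and the difference $\Delta := X_a - P$ satisfies $A_a \Delta + \Delta A_a^\top < 0$; since $A_a$ is Hurwitz, the unique solution of this Lyapunov inequality with negative definite right-hand side is positive definite, hence $\Delta > 0$. Therefore $J(F,G) = \mathrm{tr}(C_a P C_a^\top) \le \mathrm{tr}(C_a X_a C_a^\top) < \gamma$.

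For (i)$\Rightarrow$(ii), the natural candidate $X_a = P$ satisfies the Lyapunov inequality only non-strictly, so I would introduce a perturbation: for $\varepsilon > 0$, define $X_a$ as the unique solution of $A_a X_a + X_a A_a^\top + E_a E_a^\top + \varepsilon I = 0$, which exists because $A_a$ is Hurwitz and admits the closed form $X_a = P + \varepsilon \int_0^\infty e^{A_a t} e^{A_a^\top t}\,dt > 0$. By construction, $A_a X_a + X_a A_a^\top + E_a E_a^\top = -\varepsilon I < 0$. Moreover, $\mathrm{tr}(C_a X_a C_a^\top) = J(F,G) + \varepsilon\, \mathrm{tr}\!\bigl(C_a \int_0^\infty e^{A_a t}e^{A_a^\top t}\,dt\, C_a^\top\bigr)$, which tends to $J(F,G) < \gamma$ as $\varepsilon \downarrow 0$; choosing $\varepsilon$ sufficiently small thus secures the trace bound as well.

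For the equivalence (i)$\Leftrightarrow$(iii), I would run the same two arguments with the observability Gramian $Q := \int_0^\infty e^{A_a^\top t} C_a^\top C_a\, e^{A_a t}\,dt$, which satisfies $A_a^\top Q + Q A_a + C_a^\top C_a = 0$ and also yields $J(F,G) = \mathrm{tr}(E_a^\top Q E_a)$ via cyclicity of the trace. The only subtle point I anticipate is being careful, in the (ii)$\Rightarrow$(i) step, that the strict Lyapunov inequality really does force $A_a$ to be Hurwitz without an a priori stability assumption; this is the standard Lyapunov argument, but it is the part where all of the information is being extracted, so it deserves explicit mention rather than being taken for granted.
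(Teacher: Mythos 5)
Your proof is correct: the Gramian identity $J(F,G)=\mathrm{tr}(C_aPC_a^\top)$, the Lyapunov extraction of stability from the strict inequality, the comparison $X_a-P>0$, and the $\varepsilon$-perturbation of the Lyapunov equation for the converse are exactly the standard argument, and the dual version for (iii) goes through verbatim. The paper itself offers no proof of this proposition---it is stated as a known result with a citation to Scherer's lecture notes---so your write-up is essentially a self-contained reconstruction of the textbook proof the paper relies on.
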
 
%
%

%
The following lemma is an extension of Theorem 6 in \cite{HAESAERT2018306}. It provides conditions under which the  controller \eqref{dyna_w} with gain matrices $F$ and $G = Q\bar{C}_1^\top$  is suboptimal for the continuous-time system \eqref{sys_xyz}, where $Q$ is a particular real symmetric solution of a given Riccati inequality. The result shows that the separation principle is also applicable in the context of suboptimal $\mathcal{H}_2$ control for continuous-time systems.

%

\begin{lem}\label{lem_F_G}
	%
	Let $\gamma > 0$ be a given tolerance. 
	Assume that $\bar{D}_1\bar{E}^{\top}  =0$, $\bar{D}_2^{\top} \bar{C}_2 =0$, $\bar{D}_1  \bar{D}_1^{\top} =I_r$ and $\bar{D}_2^\top \bar{D}_2 >0$. 
	Let  $F\in \mathbb{R}^{m \times n}$. Suppose that there exists $P > 0$ satisfying
	\begin{equation}\label{are_P}
		(\bar{A} + \bar{B} F)^\top P + P (\bar{A} + \bar{B} F) + (\bar{C}_2 + \bar{D}_2 F)^\top  (\bar{C}_2 + \bar{D}_2 F)   < 0. 
	\end{equation}
	Let $Q > 0$ be a solution of the Riccati inequality
	\begin{equation}\label{are_Q}
		\bar{A} Q + Q \bar{A}^\top - Q \bar{C}_1^\top \bar{C}_1Q + \bar{E} \bar{E}^\top < 0.
	\end{equation}
	If, moreover, the inequality 
	\begin{equation}\label{gamma_P_Q}	
		\text{\em tr} \left( \bar{C}_1Q   P Q \bar{C}_1^\top  \right) + \text{\em tr} \left( \bar{C}_2 Q  \bar{C}_2^\top  \right) < \gamma
	\end{equation}
	holds, then the controller \eqref{dyna_w} with the gains $F$ and  $G = Q \bar{C}_1^\top$ yields an internally stable closed loop system \eqref{sys_dyna_w}, and it  is suboptimal, i.e. $J(F, G) <\gamma$.
\end{lem}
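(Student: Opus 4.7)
The plan is to apply Proposition~\ref{prop_Ae}, statement~(ii), to the closed-loop system \eqref{sys_dyna_w} after the change of coordinates $e := x - w$. In the coordinates $(w, e)$ the closed-loop system matrix becomes block upper triangular, with diagonal blocks $\bar{A}_F := \bar{A}+\bar{B}F$ and $\bar{A}_G := \bar{A}-G\bar{C}_1$ and off-diagonal $(1,2)$-block $G\bar{C}_1$; the disturbance input has upper block $G\bar{D}_1$ and lower block $\bar{E}-G\bar{D}_1$, and the controlled output becomes $\hat{C} = \begin{pmatrix}\bar{C}_2+\bar{D}_2 F & \bar{C}_2\end{pmatrix}$. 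Since the Lyapunov conditions in Proposition~\ref{prop_Ae} are invariant under state-space similarity, it suffices to construct a positive definite certificate $\hat{X}$ for the transformed triple.

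I would then try the block-diagonal ansatz $\hat{X} = \mathrm{blockdiag}(X_1, Q)$, where $X_1 > 0$ is the unique solution of the Lyapunov equation $\bar{A}_F X_1 + X_1\bar{A}_F^\top + GG^\top + \delta I = 0$ for a small $\delta > 0$ to be fixed later; note that $\bar{A}_F$ is Hurwitz as a direct consequence of \eqref{are_P}, so $X_1$ is well defined and positive definite. The key step is to verify $\hat{A}\hat{X} + \hat{X}\hat{A}^\top + \hat{E}\hat{E}^\top < 0$ block by block. The $(1,1)$-block equals $-\delta I$ by construction. Using the assumptions $\bar{D}_1\bar{E}^\top = 0$ and $\bar{D}_1\bar{D}_1^\top = I_r$ together with $G = Q\bar{C}_1^\top$, the $(1,2)$-block reduces to $G\bar{C}_1 Q - GG^\top$, which vanishes because $G^\top = \bar{C}_1 Q$. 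The $(2,2)$-block simplifies to $\bar{A}Q + Q\bar{A}^\top - Q\bar{C}_1^\top\bar{C}_1 Q + \bar{E}\bar{E}^\top$, which is negative definite by \eqref{are_Q}.

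For the cost, using $\bar{D}_2^\top \bar{C}_2 = 0$ one obtains
\begin{equation*}
\text{tr}(\hat{C}\hat{X}\hat{C}^\top) = \text{tr}(X_1 M_F) + \text{tr}(\bar{C}_2 Q \bar{C}_2^\top),
\end{equation*}
with $M_F := (\bar{C}_2+\bar{D}_2 F)^\top(\bar{C}_2+\bar{D}_2 F)$. The first trace is bounded via Lyapunov duality: letting $\tilde{P} \ge 0$ be the exact solution of $\bar{A}_F^\top \tilde{P} + \tilde{P}\bar{A}_F + M_F = 0$, one has $\text{tr}(X_1 M_F) = \text{tr}(\tilde{P}(GG^\top + \delta I))$. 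The strict inequality \eqref{are_P} forces $P \ge \tilde{P}$, hence
\begin{equation*}
\text{tr}(\hat{C}\hat{X}\hat{C}^\top) \le \text{tr}(PGG^\top) + \text{tr}(\bar{C}_2 Q\bar{C}_2^\top) + \delta\,\text{tr}(P).
\end{equation*}
Since $\text{tr}(PGG^\top) = \text{tr}(\bar{C}_1 QPQ\bar{C}_1^\top)$, the hypothesis \eqref{gamma_P_Q} provides a strictly positive slack that is absorbed by choosing $\delta$ small enough, giving $\text{tr}(\hat{C}\hat{X}\hat{C}^\top) < \gamma$. Proposition~\ref{prop_Ae} then yields both internal stability and $J(F,G) < \gamma$.

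The main obstacle is identifying the correct coordinate change and ansatz. In the original $(x, w)$ coordinates the Lyapunov certificate must be dense, whereas the transformation to $(w, e)$ exposes a block triangular structure in which a block-diagonal $\hat{X}$ works. The cancellation $G\bar{C}_1 Q = GG^\top$ that annihilates the $(1,2)$-block is precisely the algebraic encoding of the separation principle, and the three orthogonality conditions on $\bar{D}_1$, $\bar{D}_2$ and $\bar{E}$ are tailored to make this cancellation exact.
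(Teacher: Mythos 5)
Your proof is correct and follows essentially the same route as the paper's: the change to observer-error coordinates, the block-diagonal certificate with $Q$ in the error block, the cancellation $G\bar{C}_1 Q = GG^\top$ that kills the off-diagonal block, and the reduction of the $(2,2)$-block to \eqref{are_Q} are exactly the paper's argument. The only difference is cosmetic: where the paper invokes the equivalence of statements (ii) and (iii) in Proposition~\ref{prop_Ae} to trade the observability-type certificate $P$ from \eqref{are_P} for a controllability-type certificate $\Delta$ of the $w$-block, you construct that block explicitly as a perturbed controllability Gramian $X_1$ and bound its trace contribution via Lyapunov duality and the comparison $P \ge \tilde{P}$, which is an equivalent, slightly more self-contained way of doing the same step.
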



\begin{proof}
	Let  $Q > 0$ satisfy \eqref{are_Q} and gain matrix  $F$ be given.
	Note that \eqref{gamma_P_Q} is equivalent to	\begin{equation}\label{ineq_gamma}
		\text{tr} \left( \bar{C}_1Q   P Q \bar{C}_1^\top  \right) < \gamma - \text{tr} \left( \bar{C}_2 Q  \bar{C}_2^\top  \right).
	\end{equation}
	According to cases \eqref{case2} and \eqref{case3} in Proposition \ref{prop_Ae}, there exists  $P>0$ satisfying \eqref{are_P} and \eqref{ineq_gamma}
	if and only if there exists $\Delta >0$ satisfying
	\begin{equation}\label{cost}
		\text{tr}\left((\bar{C}_2 + \bar{D}_2 F) \Delta  (\bar{C}_2+ \bar{D}_2 F)^\top\right) < \gamma - \text{tr} \left( \bar{C}_2 Q  \bar{C}_2^\top  \right)
	\end{equation}
	and
	\begin{equation}\label{F_Q}
		(\bar{A} + \bar{B} F)  \Delta + \Delta (\bar{A} + \bar{B} F)^\top +Q \bar{C}_1^\top \bar{C}_1 Q  < 0. 
	\end{equation}
	On the other hand, by applying the state transformation
	\begin{equation*}
		\begin{pmatrix}
			w \\
			e
		\end{pmatrix}
		=
		\begin{pmatrix}
			0 & I_n \\
			-I_n & I_n
		\end{pmatrix}
		\begin{pmatrix}
			x\\
			w
		\end{pmatrix}.
	\end{equation*}
	The system \eqref{sys_dyna_w} then becomes
	\begin{equation}\label{sys_dyna_new}
		\begin{aligned}
			\begin{pmatrix}
				\dot{w}\\
				\dot{e}
			\end{pmatrix} 
			& =
			\begin{pmatrix}
				\bar{A} +  \bar{B}F  & - G \bar{C}_1  \\
				0 & \bar{A} -G \bar{C}_1 
			\end{pmatrix} 
			\begin{pmatrix}
				w \\
				e
			\end{pmatrix}
			+
			\begin{pmatrix}
				G \bar{D}_1	\\
				G \bar{D}_1 - \bar{E} 
			\end{pmatrix}d ,
			\\
			z & =
			\begin{pmatrix}
				\bar{C}_2 + \bar{D}_2 F & -\bar{C}_2
			\end{pmatrix}
			\begin{pmatrix}
				w \\
				e
			\end{pmatrix}.
		\end{aligned} 
	\end{equation}
	Clearly, the system \eqref{sys_dyna_w} is internally stable  if and only if $\bar{A} +  \bar{B}F $ and $\bar{A} -G \bar{C}_1 $ are Hurwitz.
	Thus, what remains to show   is that the controller \eqref{dyna_w} with  the gains $F$ and  $G = Q \bar{C}_1^\top$ internally stabilizes the system \eqref{sys_xyz} and that $J(F, G) <\gamma$.

	Note that \eqref{are_Q} is equivalent to 
	\begin{equation}\label{are_Q1}
		\begin{aligned}
			(\bar{A}-Q \bar{C}_1^\top \bar{C}_1) Q + Q( \bar{A}-Q \bar{C}_1^\top \bar{C}_1)^\top  &\\
			+ (\bar{E} + Q \bar{C}_1^\top \bar{D}_1)  (\bar{E} + Q \bar{C}_1^\top \bar{D}_1)^\top  &< 0,
		\end{aligned}
	\end{equation}
	where we use the fact that $\bar{D}_1\bar{E}^{\top}  =0$ and  $\bar{D}_1  \bar{D}_1^{\top} =I_r$.
	Since $G = Q \bar{C}_1^\top$, it then  follows that $\bar{A} -G \bar{C}_1 $ is Hurwitz.
	Similarly, it follows from \eqref{are_P} that $\bar{A} +  \bar{B}F $ is Hurwitz.
	Consequently, the system  \eqref{sys_dyna_w} is internally stable.
	
	Next, we will show that  $J(F,G) <\gamma$. Again consider \eqref{sys_dyna_new} and denote 
	\begin{align*}
		\bar{A}_a & =  	
		\begin{pmatrix}
			\bar{A} +  \bar{B}F  & - G \bar{C}_1  \\
			0 & \bar{A} -G \bar{C}_1 
		\end{pmatrix} ,\\
		\bar{C}_a  &= 
		\begin{pmatrix}
			\bar{C}_2 + \bar{D}_2 F & -\bar{C}_2
		\end{pmatrix}, \\
		\bar{E}_a  &=
		\begin{pmatrix}
			G \bar{D}_1	\\
			G \bar{D}_1 - \bar{E} 
		\end{pmatrix}.
	\end{align*}
	According to Proposition \ref{prop_Ae}, in particular the inequalities in \eqref{prop_ineq_1}, we have $J(F,G) <\gamma$ if and only if there exists $P_a>0$ satisfying
	\begin{equation}\label{ineq_Pe}		
		\begin{aligned}
			\bar{A}_a P_a + P_a \bar{A}_a^\top + \bar{E}_a \bar{E}_a^\top &<0, \\
			{\rm tr}(\bar{C}_a P_a \bar{C}_a^\top) & < \gamma.
		\end{aligned}
	\end{equation}
	We  will show that the existence of solutions $Q>0$ and $\Delta >0$ to the inequalities \eqref{are_Q}, \eqref{cost} and \eqref{F_Q} implies that  \eqref{ineq_Pe} has a solution $P_a>0$.
	Let 
	\[
	P_a = 
	\begin{pmatrix}
	\Delta & 0 \\
	0 & Q
	\end{pmatrix}.
	\]
	Clearly, $P_a >0$.
	By substituting $P_a$ , $\bar{A}_a$, $\bar{E}_a$ and $\bar{C}_a$ into \eqref{ineq_Pe}, we obtain 
	\begin{equation}\label{R}
		\begin{pmatrix}
			R_1 & R_{12} \\
			R_{12}^\top & R_2
		\end{pmatrix} <0,
	\end{equation}
	where
	\begin{align*}
		&R_1 = (\bar{A} + \bar{B}F) \Delta + \Delta (\bar{A} -\bar{B}F)^\top + GG^\top, \\
		&R_{12} =- G \bar{C}_1 Q +  GG^\top , \\
		&R_2 = (\bar{A}-G\bar{C}_1) Q  + Q (\bar{A}-G\bar{C}_1)^\top \\
		&\qquad\quad + (G\bar{D}_1 - \bar{E})(G\bar{D}_1 - \bar{E})^\top,
	\end{align*}
	and 
	\begin{equation}\label{cost_F_C}
		{\rm tr}\left((\bar{C}_2 + \bar{D}_2 F) \Delta  (\bar{C}_2+ \bar{D}_2 F)^\top\right) + {\rm tr}\left( \bar{C}_2 Q \bar{C}_2^\top\right) <\gamma. 
	\end{equation}
	%
	It then follows from $G = Q \bar{C}_1^\top$,  \eqref{are_Q} and \eqref{F_Q} that $R_1 <0$, $R_{12} =0$ and $R_2 <0$. Subsequently,  $R <0$.
	Also, it follows from \eqref{cost} that \eqref{cost_F_C} holds.
	Hence,  $J(F,G) <\gamma$.
	This completes the proof.
\end{proof}
\begin{thm}\label{linear_h2}
	%
	Let $\gamma > 0$.
	%
	%
	Assume that $\bar{D}_1 \bar{E}^{\top} =0$, $\bar{D}_2^{\top} \bar{C}_2 =0$ and  $\bar{D}_1  \bar{D}_1^{\top} =I_r$,  $\bar{D}_2^\top \bar{D}_2>0$. 
	Suppose that there exist $P >0$ and $Q>0$ satisfying
	\begin{align}
		\bar{A}^\top P + P \bar{A} -P \bar{B} (\bar{D}_2^\top \bar{D}_2)^{-1}  \bar{B}^\top P + \bar{C}_2^\top \bar{C}_2 & <0,  \label{P2}\\
		\bar{A} Q + Q \bar{A}^\top - Q \bar{C}_1^\top  \bar{C}_1 Q + \bar{E} \bar{E}^\top  & <0, \label{Q2}\\
		\text{\em tr} \left(\bar{C}_1 Q  P Q \bar{C}_1^\top \right) + \text{\em tr} \left( \bar{C}_2 Q  \bar{C}_2^\top  \right) &< \gamma. \label{gamma_2}
	\end{align}
	Let $G = Q \bar{C}_1^\top$ and $F = -(\bar{D}_2^\top \bar{D}_2)^{-1}\bar{B}^\top P$. 
	Then the controller \eqref{dyna_w} internally stabilizes the system \eqref{sys_xyz}, and it is suboptimal, i.e. $J(F, G) <\gamma$.
\end{thm}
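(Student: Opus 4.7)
The strategy is to show that Theorem \ref{linear_h2} is essentially a corollary of Lemma \ref{lem_F_G}: the specific static state feedback gain $F = -(\bar{D}_2^\top \bar{D}_2)^{-1}\bar{B}^\top P$ is precisely the one that converts the Riccati inequality \eqref{P2} into the Lyapunov-type inequality \eqref{are_P} required by the lemma, while the observer gain $G = Q\bar{C}_1^\top$ and the hypotheses \eqref{Q2}, \eqref{gamma_2} directly match \eqref{are_Q} and \eqref{gamma_P_Q}.

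Concretely, the plan is first to substitute $F = -(\bar{D}_2^\top \bar{D}_2)^{-1}\bar{B}^\top P$ into the left-hand side of \eqref{are_P} and expand. Writing out
\begin{equation*}
(\bar{A}+\bar{B}F)^\top P + P(\bar{A}+\bar{B}F) + (\bar{C}_2+\bar{D}_2 F)^\top(\bar{C}_2+\bar{D}_2 F),
\end{equation*}
the cross terms $\bar{C}_2^\top \bar{D}_2 F$ and their transposes vanish by the assumption $\bar{D}_2^\top \bar{C}_2 = 0$. The remaining three $F$-dependent terms combine, via the identity $F^\top \bar{D}_2^\top \bar{D}_2 F = P\bar{B}(\bar{D}_2^\top \bar{D}_2)^{-1}\bar{B}^\top P$, into a single $-P\bar{B}(\bar{D}_2^\top \bar{D}_2)^{-1}\bar{B}^\top P$. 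What remains is exactly the left-hand side of \eqref{P2}, which is negative definite by assumption. Hence \eqref{are_P} holds with this $P$ and $F$.

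Second, \eqref{are_Q} is literally \eqref{Q2}, and \eqref{gamma_P_Q} is literally \eqref{gamma_2}, so these hypotheses of Lemma \ref{lem_F_G} are fulfilled verbatim by $Q$. The remaining standing hypotheses ($\bar{D}_1 \bar{E}^\top = 0$, $\bar{D}_2^\top \bar{C}_2 = 0$, $\bar{D}_1 \bar{D}_1^\top = I_r$, $\bar{D}_2^\top \bar{D}_2 > 0$) are assumed identically in both statements. Applying Lemma \ref{lem_F_G} then immediately yields internal stability of the closed loop \eqref{sys_dyna_w} and $J(F,G) < \gamma$, which is the desired conclusion.

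There is no real obstacle here; the only nontrivial step is the completing-the-square computation in the first paragraph, and even that is routine once one exploits $\bar{D}_2^\top \bar{C}_2 = 0$. The nontrivial mathematical content has already been absorbed in Lemma \ref{lem_F_G} (which encodes the separation principle via the triangularization transformation), so the role of Theorem \ref{linear_h2} is merely to identify a concrete, Riccati-based choice of $P$ and $F$ under which Lemma \ref{lem_F_G} is applicable.
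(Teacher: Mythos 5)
Your proposal is correct and follows essentially the same route as the paper: the paper's proof consists precisely of substituting $F = -(\bar{D}_2^\top \bar{D}_2)^{-1}\bar{B}^\top P$ into \eqref{are_P}, observing (as you do, using $\bar{D}_2^\top\bar{C}_2=0$ and completing the square) that this recovers \eqref{P2}, and then invoking Lemma \ref{lem_F_G} since \eqref{Q2} and \eqref{gamma_2} coincide with \eqref{are_Q} and \eqref{gamma_P_Q}. Your write-up just spells out the algebra that the paper leaves implicit.
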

\begin{proof}
	Substituting  $F = -(\bar{D}_2^\top \bar{D}_2)^{-1}\bar{B}^\top P$ into \eqref{are_P} gives us the inequality \eqref{P2}.
	The rest follows from Lemma \ref{lem_F_G}.
\end{proof}


We are now ready to deal with the suboptimal distributed  $\mathcal{H}_2$ control problem by dynamic output feedback for multi-agent systems.

\section{Suboptimal Distributed $\mathcal{H}_2$ Control for Multi-Agent Systems by Dynamic Output Feedback} \label{sec_suboptimal_mas}
In this section, we will address Problem \ref{prob1}.
%
For the multi-agent system \eqref{mas_decoupled}, we will establish a design method for local gains $F$ and $G$ such that the protocol \eqref{dyna_protocol_1} achieves $J(F,G) < \gamma$ and synchronizes the network \eqref{mas_compact_1}. 

Let $U$ be an orthogonal matrix such that $U^{\top} {L} U = \Lambda = \text{diag}(0, \lambda_2, \ldots, \lambda_N)$ with $0 = \lambda_1 < \lambda_2 \leq \cdots \leq \lambda_N$ the eigenvalues of the Laplacian matrix. We apply the state transformation
\begin{equation}\label{state_trans1}
	\begin{pmatrix}
		\bar{\textbf{x}} \\
		\bar{\textbf{w}}
	\end{pmatrix} 
	=
	\begin{pmatrix}
		U^{\top} \otimes I_n & 0 \\
		0 & U^{\top} \otimes I_n	
	\end{pmatrix}
	\begin{pmatrix}
		\textbf{x} \\
		\textbf{w}
	\end{pmatrix}.
\end{equation}
Then the controlled network \eqref{mas_compact_1} with the associated output \eqref{outputzeta} is also represented by 
	\begin{align} 
		\begin{pmatrix}
			\dot{\bar{\textbf{x}}} \\
			\dot{\bar{\textbf{w}}}
		\end{pmatrix} 
		&=
		\begin{pmatrix}
			I_N \otimes A & I_N \otimes BF \\
			\Lambda \otimes GC_1 & I_N \otimes(A - G C_1) + \Lambda \otimes BF
		\end{pmatrix}
		\begin{pmatrix}
			\bar{\textbf{x}}\\
			\bar{\textbf{w}}
		\end{pmatrix} \nonumber\\
		&\qquad +
		\begin{pmatrix}
			U^{\top} \otimes E \\
			U^{\top}L \otimes GD_1
		\end{pmatrix} \textbf{d}
		,\nonumber
		\\
		\boldsymbol{\zeta} &= 
		\begin{pmatrix}
			W^{\frac{1}{2}} R^{\top}  U \otimes C_2 & W^{\frac{1}{2}} R^{\top}  U  \otimes D_2 F	
		\end{pmatrix}
		\begin{pmatrix}
			\bar{\textbf{x}}\\
			\bar{\textbf{w}}
		\end{pmatrix}.\label{mas_compact_trans}
	\end{align}
%
Denote
\begin{equation*}
	\begin{aligned}
		\bar{A}_e&  = 
		\begin{pmatrix}
			I_N \otimes A & I_N \otimes BF \\
			\Lambda \otimes GC_1 & I_N \otimes(A - G C_1) + \Lambda \otimes BF
		\end{pmatrix},
		\\
		\bar{C}_e & =
		\begin{pmatrix}
			W^{\frac{1}{2}} R^{\top}  U \otimes C_2 & W^{\frac{1}{2}} R^{\top}  U  \otimes D_2 F	
		\end{pmatrix},\\
				\bar{E}_e & =
		\begin{pmatrix}
		U^{\top} \otimes E \\
		U^{\top}L \otimes GD_1
		\end{pmatrix}.
	\end{aligned}
\end{equation*}
Obviously, the impulse response matrix $T_{F,G}(t)$ given by \eqref{impresponse} is then equal to 
$\bar{C}_e e^{\bar{A}_e t} \bar{E}_e$.

In order to proceed, we now introduce the $N-1$ auxiliary linear systems
\begin{equation}\label{new_systems}
	\begin{aligned}
		\dot{\xi}_i & = A \xi_i + \lambda_i B v_i + E \delta_i ,\\
		\vartheta_i & =  C_1 \xi_i + D_1 \delta_i, \\
		\eta_i & = \sqrt{\lambda_i} C_2 \xi_i  + \lambda_i \sqrt{\lambda_i} D_2 v_i,
	\end{aligned}
\end{equation}
and associated dynamic output feedback controllers
\begin{equation}\label{protocol_new_sys}
	\begin{aligned}
		\dot{\omega}_i & = A \omega_i  + \lambda_i B v_i + G (\vartheta_i- C_1 \omega_i) ,\\
		v_i & = F \omega_i, \quad i = 2, 3, \ldots, N
	\end{aligned}
\end{equation}
with gain matrices $F$ and $G$. 
	By interconnecting \eqref{protocol_new_sys} and \eqref{new_systems}, we obtain the $N-1$ closed loop systems 
\begin{align}
\begin{pmatrix}
\dot{\xi_i }\\
\dot{\omega}_i
\end{pmatrix}
& = 
\begin{pmatrix}
A & \lambda_i B F \\
GC_1 & A -GC_1 + \lambda_i B F 
\end{pmatrix}
\begin{pmatrix}
\xi_i \\
\omega_i
\end{pmatrix}
+
\begin{pmatrix}
E \\
G D_1
\end{pmatrix} \delta_i , \nonumber
\\
\eta_i &= 
\begin{pmatrix}
\sqrt{\lambda_i} C_2 & \lambda_i \sqrt{\lambda_i} D_2 F
\end{pmatrix}
\begin{pmatrix}
\xi_i \\
\omega_i
\end{pmatrix},\label{sys_new2}
\end{align}
for $ i = 2, 3,\ldots,N$.
The impulse response matrix of \eqref{sys_new2} from the disturbance $\delta_i$ to the output $\eta_i$ is equal to 
\begin{equation} \label{help}
T_{i, F,G}(t) = \bar{C}_i e^{\bar{A}_i t} \bar{E}_i
\end{equation}
with
$\bar{A}_i   = 	
\begin{pmatrix}
A & \lambda_i B F \\
GC_1 & A -GC_1 + \lambda_i B F 
\end{pmatrix}, $
$
\bar{E}_i   = 	
\begin{pmatrix}
E \\
G D_1
\end{pmatrix},$
$
\bar{C}_i  = 
\begin{pmatrix}
\sqrt{\lambda_i} C_2 & \lambda_i\sqrt{\lambda_i} D_2 F
\end{pmatrix}$.
Furthermore, for each system \eqref{new_systems} the associated $\mathcal{H}_2$ cost functional is given by
\begin{equation}\label{cost_new_sys_2}
\begin{aligned}
{J}_i(F,G) : = \int_{0}^{\infty} \text{tr} \left[T_{i,F,G}^{\top}(t) T_{i,F,G}(t) \right] dt, \
i= 2,3,\ldots,N. 
\end{aligned}
\end{equation}
Then we have the following lemma:
\begin{lem}\label{lem_cost_2}
		Let $F \in \mathbb{R}^{m \times n}$ and $G \in \mathbb{R}^{n \times r}$. Then the dynamic protocol \eqref{dyna_protocol_1} with gain matrices $F$ and $G$ achieves synchronization for the network \eqref{mas_compact_1} if and only if for each $i = 2, 3, \ldots, N$ the controller \eqref{protocol_new_sys} with gain matrices $F$ and $G$  internally stabilizes the system \eqref{new_systems}.  
	Moreover, we have 
	\begin{equation}\label{cost_costs2}
		J(F,G) = \sum_{i=2}^{N} {J}_i (F,G).
	\end{equation}
\end{lem}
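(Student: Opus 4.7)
The plan is to decouple the controlled network by the orthogonal coordinate change \eqref{state_trans1} followed by a mode-wise rescaling of the observer state, and then read off both claims block by block. After applying \eqref{state_trans1} and the natural reordering that pairs $\bar{x}_i$ with $\bar{w}_i$, the identity $U^{\top}LU=\Lambda$ renders $\bar{A}_e$ block-diagonal with $2n$-dimensional $i$-th diagonal block
\[
A_{e,i}=\begin{pmatrix} A & BF \\ \lambda_i G C_1 & A-GC_1+\lambda_i BF\end{pmatrix},
\]
while the corresponding slices of the disturbance and output maps factor as
\[
\bar{E}_{e,i}=\begin{pmatrix}E\\ \lambda_i GD_1\end{pmatrix}(u_i^{\top}\otimes I_q),\quad \bar{C}_{e,i}=(v_i\otimes I_p)\begin{pmatrix}C_2 & D_2F\end{pmatrix},
\]
where $u_i$ denotes the $i$-th column of $U$ and $v_i:=W^{1/2}R^{\top}u_i\in\mathbb{R}^{K}$. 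The two key identities $u_i^{\top}u_j=\delta_{ij}$ and $v_i^{\top}v_j=u_i^{\top}Lu_j=\lambda_i\delta_{ij}$ drive the whole argument; in particular $v_1=0$, so the $(i{=}1)$-block contributes nothing to $\boldsymbol{\zeta}$.

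For each $i\ge 2$ I then apply the similarity $S_i=\text{diag}(I_n,\lambda_i^{-1}I_n)$, that is, the rescaling $(\xi_i,\omega_i)=(\bar{x}_i,\bar{w}_i/\lambda_i)$. A direct computation yields $S_iA_{e,i}S_i^{-1}=\bar{A}_i$, $S_i\bar{E}_{e,i}=\bar{E}_i(u_i^{\top}\otimes I_q)$, and $\bar{C}_{e,i}S_i^{-1}=(v_i\otimes I_p)\tilde{C}_i$ with $\tilde{C}_i=(C_2,\,\lambda_iD_2F)$, so that $\bar{C}_i=\sqrt{\lambda_i}\,\tilde{C}_i$ in the notation of \eqref{help}. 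Since $u_1=\mathbf{1}_N/\sqrt{N}$, the differences $x_i-x_j$ and $w_i-w_j$ depend only on $\bar{x}_k,\bar{w}_k$ for $k\ge 2$, so synchronization of \eqref{mas_compact_1} under $\mathbf{d}=0$ is equivalent to $\bar{x}_k(t),\bar{w}_k(t)\to 0$ for all $k\ge 2$; by the similarity $S_i$ this is, in turn, equivalent to each $\bar{A}_i$ being Hurwitz, proving the first claim.

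For the cost, I write $T_{F,G}(t)=\sum_{i=1}^{N}T_i(t)$ with $T_i(t):=\bar{C}_{e,i}e^{A_{e,i}t}\bar{E}_{e,i}$. The modal cross-terms vanish in trace because $\bar{E}_{e,i}\bar{E}_{e,j}^{\top}$ carries the scalar factor $u_i^{\top}u_j=0$ whenever $i\ne j$, while the $(i{=}1)$ diagonal term is zero since $v_1=0$. For every $i\ge 2$ the similarity above together with $\bar{C}_i=\sqrt{\lambda_i}\tilde{C}_i$ gives
\[
T_i(t)=\tfrac{1}{\sqrt{\lambda_i}}(v_i\otimes I_p)\,T_{i,F,G}(t)\,(u_i^{\top}\otimes I_q),
\]
and then $(v_i^{\top}\otimes I_p)(v_i\otimes I_p)=\lambda_iI_p$, $u_i^{\top}u_i=1$, together with the cyclic trace identity, combine to yield $\text{tr}[T_i^{\top}(t)T_i(t)]=\text{tr}[T_{i,F,G}^{\top}(t)T_{i,F,G}(t)]$. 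Integrating over $t\ge 0$ delivers \eqref{cost_costs2}.

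The main bookkeeping obstacle is aligning the two factors of $\sqrt{\lambda_i}$---one coming from the $D_2F$ output channel, the other from the rescaling of $\bar{w}_i$---against $\|v_i\|^2=\lambda_i$; once they match, what remains is routine Kronecker algebra.
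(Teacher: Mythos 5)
Your proof is correct and follows essentially the same route as the paper: the orthogonal transformation \eqref{state_trans1} with $U^{\top}LU=\Lambda$ and $L=RWR^{\top}$, the modal block-diagonalization, and the reduction to the systems \eqref{sys_new2} — your explicit similarity $S_i=\mathrm{diag}(I_n,\lambda_i^{-1}I_n)$ is precisely the isomorphism between $(A_i,E_i,C_i)$ and $(\bar{A}_i,\bar{E}_i,\bar{C}_i)$ that the paper invokes. The only (harmless) differences are that you track the rank-one factors $u_i$ and $v_i=W^{1/2}R^{\top}u_i$ explicitly instead of passing to the Gram matrices $\bar{C}_e^{\top}\bar{C}_e$ and $\bar{E}_e\bar{E}_e^{\top}$, and you prove the synchronization equivalence directly rather than citing the external lemmas.
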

\begin{proof}
	It follows immediately from \cite[Lemmas 3.2 and 3.3]{harry_2013} that the dynamic protocol \eqref{dyna_protocol_1} achieves synchronization for the network \eqref{mas_compact_1} if and only if  for  $i = 2, 3, \ldots, N$ the system  \eqref{new_systems} is internally stabilized by the controller  \eqref{protocol_new_sys}. 
	
	Next, we prove \eqref{cost_costs2}.
Let $F$ and $G$ be such that synchronization is achieved. Then we have 
\[
J(F,G) = \int_0^\infty \text{tr} \left( {\bar{E}_e^\top e^{\bar{A}^\top_e t} \bar{C}_e^\top \bar{C}_e e^{\bar{A}_e t} \bar{E}_e}\right)  dt.
\]
Since $U^{\top} {L} U = \Lambda$, ${L} = R W R^{\top}$, we have	
$\bar{C}_e^\top \bar{C}_e =  \tilde{C}_e^\top \tilde{C}_e$
with 
$
\tilde{C}_e := \left( \Lambda^{\frac{1}{2}} \otimes C_2 ~~\Lambda^{\frac{1}{2}} \otimes D_2 F \right)
$.	
We also have $\bar{E}_e  \bar{E}^\top_e = \tilde{E}_e  \tilde{E}^\top_e$ with 
$
\tilde{E}_e := 
\begin{pmatrix}
I_N \otimes E \\
\Lambda \otimes G D_1
\end{pmatrix}
$. 
Thus we find that
\begin{equation} \label{step}
\textrm{tr}\left( {\bar{E}_e^\top e^{\bar{A}^\top_e t} \bar{C}_e^\top \bar{C}_e e^{\bar{A}_e t} \bar{E}_e}\right) =
\textrm{tr}\left( {\tilde{E}_e^\top e^{\bar{A}^\top_e t} \tilde{C}_e^\top \tilde{C}_e e^{\bar{A}_e t} \tilde{E}_e}\right).
\end{equation}
We now analyze the matrix function $\tilde{C}_e e^{\bar{A}_e t} \tilde{E}_e$ appearing in \eqref{step}.
By applying suitable permutations of the blocks appearing in the matrices $\tilde{C}_e$, $\tilde{E}_e$ and $\bar{A}_e$, it is straightforward to show that 
$
\tilde{C}_e e^{\bar{A}_e t} \tilde{E}_e = {\rm blockdiag} \left(0, C_2 e^{A_2 t} E_2, \ldots, C_N e^{A_N t} E_N \right),
$
where 
\begin{equation*}
\begin{aligned}
{A}_i & := 	
\begin{pmatrix}
A &  B F \\
\lambda_iGC_1 & A -GC_1 + \lambda_i B F 
\end{pmatrix}, 
\\
{C}_i & := 
\begin{pmatrix}
\sqrt{\lambda_i} C_2 & \sqrt{\lambda_i} D_2 F
\end{pmatrix},~~
{E}_i  := 	
\begin{pmatrix}
E \\
\lambda_i G D_1
\end{pmatrix}.
\end{aligned}
\end{equation*}
It is easily seen that for $i = 2,3, \ldots, N$ the systems $(A_i,E_i,C_i)$ and  $(\bar{A}_i,\bar{E}_i,\bar{C}_i)$ are isomorphic. Hence they have the same impulse response $T_{i,F,G}(t)$, which is given by \eqref{help}, see e.g.,  \cite[Theorem 3.10]{harry_book}.
As a consequence we obtain that 
$
\tilde{C}_e e^{\bar{A}_e t} \tilde{E}_e = {\rm blockdiag} \left(0, T_{2,F,G}(t), \ldots, T_{N,F,G}(t) \right).
$
Thus we find that
\[
J(F,G) = \int_0^{\infty} \sum_{i =2}^{N}  \text{tr} \left[T_{i,F,G}^{\top}(t) T_{i,F,G}(t) \right] dt.
\]
The claim \eqref{cost_costs2} then follows immediately. 
%
\end{proof}
By applying Lemma \ref{lem_cost_2},  we have transformed the suboptimal distributed $\mathcal{H}_2$ control problem by dynamic output feedback for the multi-agent network \eqref{mas_compact_1} into  suboptimal $\mathcal{H}_2$ control problems for the $N-1$ linear systems \eqref{new_systems} using controllers \eqref{protocol_new_sys} with the same gain matrices $F$ and $G$.
%
Next, we establish conditions under which the $N-1$ systems \eqref{new_systems} are internally stabilized by their corresponding controllers  \eqref{protocol_new_sys}  for $i = 2, 3, \ldots, N$, while achieving $\sum_{i=2}^{N} {J}_i (F, G) < \gamma$.

\begin{lem}\label{lem_FFG}
Let $\gamma > 0$ be a given tolerance. 
	Assume that ${D}_1 {E}^{\top} =0$, ${D}_2^{\top} {C}_2 =0$, ${D}_1  {D}_1^{\top} =I_r$ and ${D}_2^\top {D}_2 =I_m$. 
		 For  $i =2,3, \ldots, N$, let  $F$,  $P_i>0$, and $Q>0$ be such that  the inequalities 
	\begin{align}
		(A + \lambda_i B F )^\top P_i  + P_i (A + \lambda_i B F )  \qquad\qquad\qquad\quad&\nonumber\\
		 + (\sqrt{\lambda_i}C_2+ \lambda_i \sqrt{\lambda_i} D_2 F )^\top (\sqrt{\lambda_i}{C}_2 + \lambda_i \sqrt{\lambda_i} {D}_2 F )  &< 0, \label{P}\\
		A Q + QA^\top - Q C_1^\top C_1Q + E E^\top < 0, &  \label{Q}\\
		\sum_{i=2}^N 
		\left[
		\text{\em tr} \left(C_1 Q P_i QC_1^\top  \right) + \lambda_i \text{\em tr} \left(  C_2 Q  C_2^\top  \right) \right]  < \gamma &\label{gamma}
	\end{align}
	hold.
	Then for each $i=2,3, \ldots, N$, the controller \eqref{protocol_new_sys} with  gain matrices $F$ and $G = Q C_1^\top$ internally stabilizes the system  \eqref{new_systems}, and, moreover, $\sum_{i=2}^{N} {J}_i (F, G) < \gamma$.
\end{lem}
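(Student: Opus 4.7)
The plan is to reduce the statement to a per-mode application of Lemma \ref{lem_F_G}. The key observation is that each auxiliary system in \eqref{new_systems}, together with the controller \eqref{protocol_new_sys}, fits exactly the single-system framework of Lemma \ref{lem_F_G} once we identify the system data as $\bar{A}=A$, $\bar{B}=\lambda_i B$, $\bar{E}=E$, $\bar{C}_1=C_1$, $\bar{D}_1=D_1$, $\bar{C}_2=\sqrt{\lambda_i}\,C_2$, $\bar{D}_2=\lambda_i\sqrt{\lambda_i}\,D_2$. Under this identification the controller \eqref{protocol_new_sys} coincides with \eqref{dyna_w} applied to the $i$th auxiliary system, since both use $F$ and the same $G=QC_1^\top$.

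Next I would verify the four regularity assumptions of Lemma \ref{lem_F_G} for each $i$. Using $\lambda_i>0$ and the standing assumptions of the present lemma, $\bar{D}_1\bar{E}^\top = D_1E^\top = 0$, $\bar{D}_2^\top\bar{C}_2 = \lambda_i^2\,D_2^\top C_2 = 0$, $\bar{D}_1\bar{D}_1^\top = D_1D_1^\top = I_r$, and $\bar{D}_2^\top\bar{D}_2 = \lambda_i^3\,D_2^\top D_2 = \lambda_i^3 I_m > 0$. The Riccati inequality \eqref{are_P} on $P$ in Lemma \ref{lem_F_G} then pulls back, via these substitutions, to exactly \eqref{P} with $P$ replaced by $P_i$, and \eqref{are_Q} pulls back to \eqref{Q}. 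Hence the hypotheses of Lemma \ref{lem_F_G} are met mode by mode with the same $Q$ and mode-dependent $P_i$.

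Now I would convert the tolerance condition. For each $i$, define
\[
\alpha_i := \text{tr}\bigl(C_1QP_iQC_1^\top\bigr) + \lambda_i\,\text{tr}\bigl(C_2QC_2^\top\bigr),
\]
which, under the substitutions above, equals $\text{tr}(\bar{C}_1Q P_i Q\bar{C}_1^\top)+\text{tr}(\bar{C}_2Q\bar{C}_2^\top)$. Lemma \ref{lem_F_G} applied to the $i$th auxiliary system with any $\gamma_i>\alpha_i$ then asserts that the controller \eqref{protocol_new_sys} internally stabilizes \eqref{new_systems} and yields $J_i(F,G)<\gamma_i$. Since this holds for every $\gamma_i>\alpha_i$, we obtain $J_i(F,G)\le\alpha_i$. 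Summing over $i=2,\dots,N$ and invoking \eqref{gamma} yields
\[
\sum_{i=2}^{N} J_i(F,G) \le \sum_{i=2}^N \alpha_i < \gamma,
\]
which is the desired conclusion, while the internal stability of each $(\bar A_i,\bar C_i,\bar E_i)$ follows from the same application.

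There is no serious obstacle, only bookkeeping: the step that deserves the most care is tracking the $\lambda_i$ factors through the Riccati and trace inequalities and checking that the Riccati inequality in \eqref{Q} does not pick up any $\lambda_i$-dependence, so that a \emph{single} common solution $Q>0$ suffices for all modes. Once this is verified, the proof is essentially mechanical: the Riccati inequality for $P_i$ is genuinely mode-dependent and absorbs the $\lambda_i$ factors coming from $\bar B$ and $\bar C_2$, while the observer Riccati inequality for $Q$ is mode-independent, which is exactly why the separation-based design extends from a single plant to the network.
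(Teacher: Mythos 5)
Your proof is correct and follows essentially the same route as the paper's: apply Lemma \ref{lem_F_G} mode by mode with the substitutions $\bar{A}=A$, $\bar{B}=\lambda_i B$, $\bar{C}_2=\sqrt{\lambda_i}\,C_2$, $\bar{D}_2=\lambda_i\sqrt{\lambda_i}\,D_2$, etc., observe that \eqref{are_P} and \eqref{are_Q} pull back to \eqref{P} and \eqref{Q}, and sum the per-mode cost bounds. The only cosmetic difference is that the paper introduces explicit margins $\gamma_i = \alpha_i + \epsilon_i$ with $\sum_i \gamma_i < \gamma$, whereas you take the infimum over admissible tolerances to get $J_i(F,G)\le\alpha_i$ directly; both are valid.
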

\begin{proof}
	By \eqref{gamma}, for  $\epsilon_i >0$  sufficiently small, we have $\sum_{i=2}^N \gamma_i <\gamma$, where $\gamma_i := \text{tr} \left(C_1 Q P_i QC_1^\top  \right) + \lambda_i \text{tr} \left(  C_2 Q  C_2^\top  \right) + \epsilon_i$.
Since 
\[
\text{tr} \left(C_1 Q P_i QC_1^\top  \right) + \lambda_i  \text{tr} \left( C_2 Q  C_2^\top  \right) < \gamma_i,
\]
by taking $\bar{A} = A$, $\bar{B} = \lambda_i B$, $\bar{C}_1 = C_1$, $\bar{D}_1 = D_1$, $\bar{C}_2 = \sqrt{\lambda_i} C_2$, $\bar{D}_2 = \lambda_i \sqrt{\lambda_i} D_2$, $\bar{C}_1 = C_1$ and $\bar{E} =E$ in Lemma \ref{lem_F_G}, it follows that the controller \eqref{protocol_new_sys} internally stabilizes   the system  \eqref{new_systems} and $ {J}_i (F, G) < \gamma_i$.
Thus, from $\sum_{i=2}^N \gamma_i <\gamma$  it follows  that $\sum_{i=2}^{N} {J}_i (F, G) < \gamma$.
%
\end{proof}

Again, we note that the four conditions $D_1E^{\top}  =0$, $D_2^{\top} C_2 =0$, $D_1 D_1^{\top} =I_r$ and ${D}_2^\top {D}_2 =I_m$ are made here to simplify notation, and can be replaced by the regularity conditions $D_1 D_1^{\top} >0$ and ${D}_2^\top {D}_2 >0$ alone.

By combining Lemma \ref{lem_cost_2} and Lemma \ref{lem_FFG}  we have established sufficient conditions for given gain matrices $F$ and $G$ to synchronize the network  \eqref{mas_compact_1} and to be suboptimal, i.e. $J(F, G) < \gamma$. In fact, $G$ is taken to be equal to  $Q C_1^\top$, with $Q>0$ a solution to the Riccati inequality \eqref{Q}.
However, no design method has yet been provided  to compute a suitable matrix $F$.
In the following theorem, we will establish a design method for computing such gain matrix $F$. Together with $G$ given above, this will lead to a distributed suboptimal protocol for multi-agent system (\ref{mas_decoupled}) with associated cost functional (\ref{cost_FG}). 

\begin{thm}\label{main1}
Let $\gamma > 0$ be a given tolerance. 
	Assume that ${D}_1 {E}^{\top} =0$, ${D}_2^{\top} {C}_2 =0$, ${D}_1  {D}_1^{\top} =I_r$ and ${D}_2^\top {D}_2 =I_m$. 
	%
	%
	Let $Q > 0$ satisfy
	\begin{equation}\label{one_are_q}
	A Q + Q A^{\top} - Q C_1^{\top} C_1 Q + E^{\top} E < 0.
	\end{equation}
	Let $c$ be any real number such that $0 < c <\frac{2}{\lambda_N^2}$.
	We distinguish two cases:
	\begin{enumerate}[(i)]
		\item \label{thm1_case1}
		if 
		\begin{equation}\label{c1}
			\frac{2}{\lambda_2^2 + \lambda_2 \lambda_N + \lambda_N^2} \leq c <\frac{2}{\lambda_N^2}
		\end{equation}
		then there exists $P>0$ satisfying 
		\begin{equation}\label{one_are_p}
		A^{\top} P + P A + (c^2 \lambda_N^3 - 2 c \lambda_N) PB B^{\top} P +\lambda_N C_2^{\top} C_2 <0.
		\end{equation}
		\item \label{thm1_case2}
		if 	
		\begin{equation}\label{c2}
			0 < c < \frac{2}{\lambda_2^2 + \lambda_2 \lambda_N + \lambda_N^2}
		\end{equation}
		then there exists $P>0$ satisfying 
		\begin{equation}\label{are_p2}
			A^{\top} P + P A + (c^2 \lambda_2^3 - 2 c \lambda_2) PB B^{\top} P +\lambda_N C_2^{\top} C_2 <0.
		\end{equation}
	\end{enumerate}
	In both cases, if in addition $P$ and $Q$ satisfy
	\begin{equation}\label{one_gamma}
		\text{\em tr} \left(C_1 Q  P Q C_1^\top \right) +  \lambda_N \text{\em tr} \left(  C_2 Q  C_2^\top  \right)  < \frac{\gamma}{N-1}.
	\end{equation}
	Then the protocol \eqref{dyna_protocol_1} with $F := - c B^{\top} P$ and $G := Q C_1^{\top}$  synchronizes the network \eqref{mas_compact_1} and it is suboptimal, i.e. $J(F, G) < \gamma$.
\end{thm}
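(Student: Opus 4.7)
\noindent\textit{Proof plan.} The strategy is to invoke Lemma~\ref{lem_FFG} with $G = QC_1^{\top}$ and $F = -cB^{\top}P$, making the specific choice $P_i := P$ for every $i = 2,3,\ldots,N$, so that a single matrix $P$ serves all $N-1$ auxiliary systems. This reduces the theorem to verifying the three hypotheses \eqref{P}, \eqref{Q}, \eqref{gamma} of that lemma. Hypothesis \eqref{Q} is precisely the given Riccati inequality \eqref{one_are_q}, so nothing needs to be done for it; the real work lies in \eqref{P} and \eqref{gamma}, together with the assertion that a positive definite $P$ of the desired form exists in each case.

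For \eqref{P}, I would first substitute $F = -cB^{\top}P$ and use the regularity identities $D_2^{\top}C_2 = 0$ and $D_2^{\top}D_2 = I_m$ to expand the cross-term, showing that \eqref{P} with $P_i = P$ is equivalent to
\[
A^{\top}P + PA + (c^2\lambda_i^3 - 2c\lambda_i)\,PBB^{\top}P + \lambda_i C_2^{\top}C_2 < 0.
\]
Because $PBB^{\top}P \geq 0$ and $C_2^{\top}C_2 \geq 0$, replacing $\lambda_i$ by any uniform upper bound for $\lambda_i$ and replacing $c^2\lambda_i^3 - 2c\lambda_i$ by any uniform upper bound over $i$ only strengthens the inequality. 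The first is handled by $\lambda_i \leq \lambda_N$. For the second, the scalar map $f(\lambda) := c^2\lambda^3 - 2c\lambda$ is convex on $(0,\infty)$, so its maximum over $[\lambda_2,\lambda_N]$ is attained at one of the endpoints; a direct calculation gives $f(\lambda_N) \geq f(\lambda_2)$ iff $c \geq 2/(\lambda_2^2 + \lambda_2\lambda_N + \lambda_N^2)$. This is exactly the threshold separating cases (i) and (ii), and the two worst-case Riccati inequalities obtained in this way are precisely \eqref{one_are_p} and \eqref{are_p2}. Existence of $P > 0$ satisfying the respective inequality is standard LQR theory applied to $(A,\sqrt{\kappa}\,B)$: in case (i) $\kappa = 2c\lambda_N - c^2\lambda_N^3 > 0$ follows from $c < 2/\lambda_N^2$, while in case (ii) the bound $c < 2/(\lambda_2^2 + \lambda_2\lambda_N + \lambda_N^2) \leq 2/(3\lambda_2^2)$ yields $\kappa = 2c\lambda_2 - c^2\lambda_2^3 > 0$; since $(A,B)$ is stabilizable, a standard perturbation of the stabilizing LQR solution produces a positive definite $P$ satisfying the strict inequality.

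For \eqref{gamma}, using $P_i = P$ and $\lambda_i \leq \lambda_N$ one bounds
\[
\sum_{i=2}^{N}\!\bigl[\text{tr}(C_1QPQC_1^{\top}) + \lambda_i\text{tr}(C_2QC_2^{\top})\bigr] \leq (N-1)\bigl[\text{tr}(C_1QPQC_1^{\top}) + \lambda_N\text{tr}(C_2QC_2^{\top})\bigr],
\]
and assumption \eqref{one_gamma} makes the right-hand side strictly less than $\gamma$. With all three hypotheses of Lemma~\ref{lem_FFG} verified, that lemma directly yields both synchronization of the network and $J(F,G) < \gamma$. The main obstacle is identifying the threshold $c_* = 2/(\lambda_2^2 + \lambda_2\lambda_N + \lambda_N^2)$: one has to recognize that the single $P$ must absorb all $N-1$ Laplacian eigenvalues at once, so the worst case of the cubic coefficient $f(\lambda) = c^2\lambda^3 - 2c\lambda$ on $[\lambda_2,\lambda_N]$ must be identified, and its location flips between the two endpoints depending on where $c$ sits relative to $c_*$. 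The remaining steps — algebraic simplification of \eqref{P}, the monotone bound for \eqref{gamma}, and the LQR existence argument — are then routine.
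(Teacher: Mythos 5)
Your proof follows essentially the same route as the paper's: reduce to Lemma~\ref{lem_FFG} with the single choice $P_i = P$ for all $i$, expand \eqref{P} under $D_2^{\top}C_2 = 0$ and $D_2^{\top}D_2 = I_m$ into the scalar-coefficient Riccati inequality $A^{\top}P + PA + (c^2\lambda_i^3 - 2c\lambda_i)PBB^{\top}P + \lambda_i C_2^{\top}C_2 < 0$, and absorb all $i$ into the worst case of $c^2\lambda^3 - 2c\lambda$ over $[\lambda_2,\lambda_N]$, whose location flips at $c_* = 2/(\lambda_2^2+\lambda_2\lambda_N+\lambda_N^2)$ --- your convexity/endpoint argument actually justifies this more cleanly than the paper's bare assertion, and you also handle case~(ii), which the paper explicitly omits. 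One small correction: Lemma~\ref{lem_FFG} only yields internal stability of the $N-1$ auxiliary closed loops and $\sum_{i=2}^{N}J_i(F,G) < \gamma$; to conclude synchronization of the network \eqref{mas_compact_1} and $J(F,G) < \gamma$ you must additionally invoke Lemma~\ref{lem_cost_2}, which provides the equivalence of synchronization with stabilization of all auxiliary systems and the identity $J(F,G) = \sum_{i=2}^{N}J_i(F,G)$.
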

\begin{proof}
We will only provide the proof for case \eqref{thm1_case1} above.
	Using the upper and lower bound on $c$ given by (\ref{c1}), it can be verified that $c^2 \lambda_N^3 - 2 c \lambda_N <0$. 
	Thus the Riccati inequality \eqref{one_are_p} has positive definite solutions.
	Since $c^2 \lambda_i^3 - 2 c \lambda_i \leq c^2 \lambda_N^3 - 2 c \lambda_N <0$ and $\lambda_i \leq \lambda_N$ for  $i=2,3,\ldots,N$, any positive definite solution $P$ of (\ref{one_are_p}) also satisfies the $N-1$ Riccati inequalities
	\begin{equation}\label{n_are1}
		A^{\top} P + P A + (c^2 \lambda_i^3 - 2 c \lambda_i) PB B^{\top} P +\lambda_i C_2^{\top} C_2 <0, 
	\end{equation}
	equivalently, 
	\begin{equation}\label{one_lyap_p1}
		\begin{aligned}
			(A-c\lambda_i B B^{\top}P)^{\top}P  + P(A- c\lambda_iB B^{\top}P) &\\
			  + c^2\lambda_i^3 PB B^{\top}P + \lambda_i C_2^{\top} C_2 &< 0,
		\end{aligned}
	\end{equation}
	for $i=2,\ldots,N$.
	Using the conditions ${D}_2^{\top} {C}_2 =0$ and ${D}_2^\top {D}_2 =I_m$ this yields
	\begin{equation}\label{one_lyap_p}
		\begin{aligned}
	&	(A-c\lambda_i B B^{\top}P)^{\top}P  + P(A- c\lambda_iB B^{\top}P) \\
	&\qquad	+(\sqrt{\lambda_i}{C}_2 + \lambda_i \sqrt{\lambda_i} {D}_2 B^\top P)^\top \\
	&\qquad\qquad\times (\sqrt{\lambda_i}{C}_2 + \lambda_i \sqrt{\lambda_i} {D}_2  B^\top P) < 0,
		\end{aligned}
	\end{equation}	
	for $i=2,\ldots,N$. Taking $P_i = P$ for $i = 2,3,\ldots,N$ and $F  = -c B^{\top}P$ in  \eqref{one_lyap_p} immediately yields (\ref{P}). 
	%
	Next, it follows from \eqref{one_gamma} that also   \eqref{gamma}  holds.
	By Lemma \ref{lem_FFG} then, all systems (\ref{new_systems}) are internally stabilized and $\sum_{i=2}^{N} {J}_i(F,G)<\gamma$. 
	Subsequently, it follows from  Lemma \ref{lem_cost_2} that the protocol (\ref{dyna_protocol_1}) achieves synchronization for the network (\ref{mas_compact_1}) and $J(F,G)<\gamma$.
	%
\end{proof}

\begin{rem}\label{rem_main1}
	\normalfont	
		In Theorem \ref{main1}, in order to select $\gamma$, the following should be done:
\begin{enumerate}[(i)]
	\item 
	First compute a solution $Q >0$ of the Riccati inequality \eqref{one_are_q} and a solution $P >0$ of the Riccati     			inequality \eqref{one_are_p} (or \eqref{are_p2}, depending on the choice of parameter $c$). Note that these solutions exist.
	\item
	Let $S(P,Q) :=\text{tr} (C_1 Q  P Q C_1^\top ) +  \lambda_N \text{\em tr} (  C_2 Q  C_2^\top )$.
	\item 
	Then choose $\gamma >0$ such that $(N -1)S(P,Q) < \gamma$. 	
\end{enumerate}
Obviously, the smaller $S(P,Q)$, the smaller the feasible upper bound $\gamma$. 
It can be shown that, unfortunately, the problem of minimizing $S(P,Q)$ over all $P,Q>0$ that satisfy \eqref{one_are_q} and \eqref{one_are_p} is a nonconvex optimization problem.
However, since smaller $Q$ leads to smaller $\textnormal{tr} \left(C_2 Q  C_2^\top  \right)$ and smaller $P$ and $Q$ leads to smaller $\textnormal{tr} \left(C_1 Q  P Q C_1^\top \right)$  and, consequently, smaller feasible $\gamma$,
	we could therefore try to find $P$ and $Q$ as small as possible.
	In fact, one can find $Q = Q(\epsilon) >0$ to \eqref{one_are_q} by solving 
	\begin{equation}\label{are_qq}
		A Q + Q A^{\top} -   Q C_1^{\top} C_1 Q + E^{\top} E + \epsilon I_n = 0.
	\end{equation}
	with $\epsilon >0$ arbitrary.
	By using a standard argument, it can be shown that $Q(\epsilon)$  decreases as $\epsilon$ decreases, so   	$\epsilon$ should be taken close to 0 in order to get small $Q$.
	%
	Similarly, one can find $P =  P(c,\sigma) >0$ satisfying (\ref{one_are_p}) by solving
	\begin{equation}\label{are}
		A^{\top}P + PA -PB R(c)^{-1}B^{\top}P + \lambda_N C_2^{\top} C_2 +\sigma I_n  = 0
	\end{equation} 
	with  $R(c) = \frac{1}{-c^2 \lambda_N^3 + 2 c \lambda_N}I_n$, where $c$ is chosen as in (\ref{c1}) and $\sigma >0$ arbitrary. Again, it can be shown that $P(c,\sigma)$ decreases with  decreasing  $\sigma$ and $c$.
	Therefore, small $P$ is obtained by choosing $\sigma>0$ close to $0$ and $c = \frac{2}{\lambda_2^2 + \lambda_2 \lambda_N + \lambda_N^2}$.  

	Similarly, if $c$ satisfies (\ref{c2}) corresponding to case \eqref{thm1_case2}, it can be shown that if we choose $\epsilon>0$ and $\sigma>0$ very close to $0$ and $c>0$ very close to $\frac{2}{\lambda_2^2 + \lambda_2 \lambda_N + \lambda_N^2}$,  we find small solutions to the Riccati inequalities \eqref{one_are_q} and  \eqref{are_p2}   in the sense as explained above for case \eqref{thm1_case1}.
\end{rem}


\begin{rem}
	\normalfont
	In Theorem \ref{main1}, exact knowledge of the largest and the smallest nonzero eigenvalue of the Laplacian matrix is used  to compute the local control gains $F$ and $G$.
	We want to remark that our results can be extended to the case that only lower and upper bounds for these eigenvalues are known.
	In the literature, algorithms are given to estimate $\lambda_2$ in a distributed way, yielding lower and upper bounds, see e.g. \cite{ARAGUES20143253}. 
	Also, an upper bound for $\lambda_N$ can be obtained in terms of the maximal node degree of the graph, see e.g. \cite{eigenvaluesL}. 
	Using these lower and upper bounds on the largest and the smallest nonzero eigenvalue  of the Laplacian matrix, results similar to Theorem \ref{main1} can be formulated, see e.g., \cite{Jiao2018} or \cite{han_observer}.
\end{rem}



\section{Simulation Example}\label{sec_simulation}
In this section, we will give a simulation example to illustrate our design method.
Consider a network of $N= 6$ identical agents with dynamics \eqref{mas_decoupled},  
where 
$	A =
	\begin{pmatrix}
		-2 & 2 \\
		-1 &  1
	\end{pmatrix}$,
	$B = 
	\begin{pmatrix}
		0 \\
		1
	\end{pmatrix},$
	$E = 
	\begin{pmatrix}
		0 & 0 \\
		0.5 & 0
	\end{pmatrix},$
	$C_1 = 
	\begin{pmatrix}
		1 & 0
	\end{pmatrix},$
	$D_1 =
	\begin{pmatrix}
		0 & 1
	\end{pmatrix},$
	$C_2 = 
	\begin{pmatrix}
		0 & 1 \\
		0 & 0
	\end{pmatrix},$
	$D_2 = 
	\begin{pmatrix}
		0 \\ 1
	\end{pmatrix}.$
%
The pair $({A}, {B})$ is stabilizable and  the pair $({C}_1, {A})$ is detectable. 
We also have  
${D}_1 {E}^{\top} =
\begin{pmatrix}
	0 & 0
\end{pmatrix}$, 
${D}_2^{\top} {C}_2  =
\begin{pmatrix}
	0 & 0
\end{pmatrix}$ 
and  ${D}_1  {D}_1^{\top} =1$, ${D}_2^\top {D}_2 =1$. 
 We assume that the communication among the six agents is represented by the undirected cycle graph. For this graph, the smallest non-zero and largest eigenvalue of the Laplacian are $\lambda_2 =1$ and $\lambda_6 = 4$. 
%
%
%
Our goal is to design a distributed dynamic output feedback protocol of the form \eqref{dyna_protocol_1} that synchronizes the controlled network and guarantees the associated cost \eqref{cost_FG} to satisfy $J(F,G) < \gamma$.
 Let the desired upper bound for the cost be $\gamma = 17$.  


We adopt the design method given in case \eqref{thm1_case1} of Theorem \ref{main1}. 
First we compute a positive definite solution $P$ to \eqref{one_are_p} by solving the Riccati equation
\begin{equation}\label{simu_are}
	A^{\top} P + P A + (c^2 \lambda_6^3 - 2 c \lambda_6) PB B^{\top} P +\lambda_6 C_2^{\top} C_2 + \sigma I_2 =0
\end{equation}
with $\sigma =0.001$. Moreover, we choose $c = \frac{2}{\lambda_2^2 + \lambda_2 \lambda_6 + \lambda_6^2} = 0.0952$, which is in fact the `best' choice for $c$ as explained in Remark \ref{rem_main1}.
Then, by solving \eqref{simu_are} in Matlab, we compute  a positive definite solution
$	P=
	\begin{pmatrix}
		 0.9048 &  -2.2810 \\
		-2.2810 &   6.9779
	\end{pmatrix}.$ 
Next, by solving the Riccati equation
\begin{equation*}
	A Q + Q A^{\top} - Q C_1^{\top} C_1 Q + E^{\top} E + \epsilon I_2= 0
\end{equation*}
with $\epsilon = 0.001$ in Matlab, we compute a positive definite solution
$	Q = 
	\begin{pmatrix}
		0.5000  &  0.5000 \\
		0.5000  &  0.6250
	\end{pmatrix}.$  
Accordingly, we compute the associated gain matrices
$	F = \begin{pmatrix}
		0.2172  & -0.6646
	\end{pmatrix},
		G = 
	\begin{pmatrix}
		0.5000 &
		0.5000
	\end{pmatrix}^\top.$

As an example, we take the initial states of the agents to be 
$x_{10} =
\begin{pmatrix}
	1 & -2
\end{pmatrix}^\top$, 
$x_{20} =
\begin{pmatrix}
2 & -5
\end{pmatrix}^\top$, 
$x_{30} =
\begin{pmatrix}
3 & 1
\end{pmatrix}^\top$, 
$x_{40} =
\begin{pmatrix}
4 & 2
\end{pmatrix}^\top$, 
$x_{50} =
\begin{pmatrix}
-1 & 2
\end{pmatrix}^\top$ and
$x_{60} =
\begin{pmatrix}
-3 & 1
\end{pmatrix}^\top$,
and  we take the initial states of the  protocol to be zero.
In Figure \ref{sync_x}, we have plotted the controlled state trajectories of the agents. It can be seen that the designed protocol  indeed synchronizes the network.
\begin{figure}[t!]
	\centering
	\includegraphics[height=5cm]{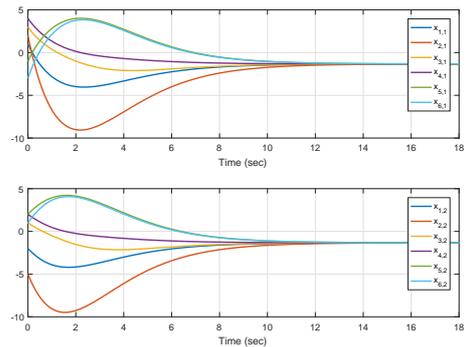}
	\caption{Plots of the state vector $x^1 = (x_{1,1}, x_{2,1}, x_{3,1}, x_{4,1}, x_{5,1}, x_{6,1})^\top$ and $x^2 = (x_{1,2}, x_{2,2}, x_{3,2}, x_{4,2}, x_{5,2}, x_{6,2})^\top$ of the controlled network} \label{sync_x}
\end{figure}
\begin{figure}[t!]
	\centering
	\includegraphics[height=5cm]{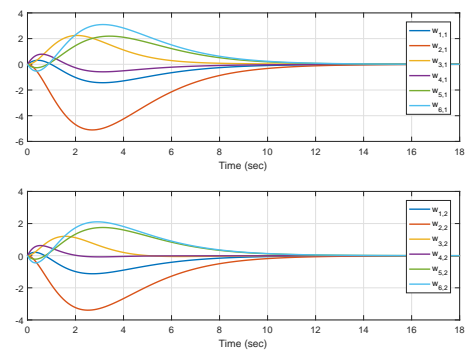}
	\caption{Plots of the state vector $w^1 = (w_{1,1}, w_{2,1}, w_{3,1}, w_{4,1}, w_{5,1}, w_{6,1})^\top$ and $w^2 = (w_{1,2}, w_{2,2}, w_{3,2}, w_{4,2}, w_{5,2}, w_{6,2})^\top$ of the dynamic protocol} \label{sync_w}
\end{figure}
The plots of the protocol states are shown in Figure \ref{sync_w}.
For each $i$, the state $w_i$ of the local controller is an estimate of the weighted sum of the relative states of agent $i$, it is seen that the protocol states converge to zero. 
Moreover, we  compute 
$5  \left(\text{tr} \left(C_1 Q  P Q C_1^\top \right) + \lambda_6 \text{tr} \left(  C_2 Q  C_2^\top  \right)\right)  = 16.6509,$
which is indeed smaller than the desired tolerance $\gamma = 17$.  

\section{Conclusion}\label{sec_conclusion}
In this paper, we have studied the suboptimal distributed $\mathcal{H}_2$ control problem by dynamic output feedback for linear multi-agent systems. The interconnection structure between the agents is given by a connected undirected graph.
Given a linear multi-agent system with identical agent dynamics and an associated global $\mathcal{H}_2$ cost functional, we have provided a design method for computing distributed protocols that guarantee the associated cost to be smaller than a given tolerance while synchronizing the controlled network.
The local gains are given in terms of solutions of two Riccati inequalities, each of dimension equal to that of the agent dynamics. One these Riccati inequalities involves the largest and smallest nonzero eigenvalue of the Laplacian matrix of the network graph. 

%



\bibliographystyle{plain}        
\bibliography{h2_auto}          

\begin{thebibliography}{10}

\bibitem{eigenvaluesL}
W.~N. Anderson and T.~D. Morley.
\newblock Eigenvalues of the {Laplacian} of a graph.
\newblock {\em Linear and Multilinear Algebra}, 18(2):141--145, 1985.

\bibitem{ARAGUES20143253}
R.~Aragues, G.~Shi, D.~V. Dimarogonas, C.~Sag\"{u}\'{e}s, K.~H. Johansson, and
  Y.~Mezouar.
\newblock Distributed algebraic connectivity estimation for undirected graphs
  with upper and lower bounds.
\newblock {\em Automatica}, 50(12):3253--3259, 2014.

\bibitem{Bart2017}
B.~{Besselink} and K.~H. {Johansson}.
\newblock String stability and a delay-based spacing policy for vehicle
  platoons subject to disturbances.
\newblock {\em IEEE Transactions on Automatic Control}, 62(9):4376--4391, 2017.

\bibitem{tamas2008}
F.~Borrelli and T.~Keviczky.
\newblock Distributed {LQR} design for identical dynamically decoupled systems.
\newblock {\em IEEE Transactions on Automatic Control}, 53(8):1901--1912, 2008.

\bibitem{wei_ren2010}
Y.~Cao and W.~Ren.
\newblock Optimal linear-consensus algorithms: an {LQR} perspective.
\newblock {\em IEEE Transactions on Systems, Man, and Cybernetics, Part B
  (Cybernetics)}, 40(3):819--830, 2010.

\bibitem{6303906}
Y.~{Cao}, W.~{Yu}, W.~{Ren}, and G.~{Chen}.
\newblock An overview of recent progress in the study of distributed
  multi-agent coordination.
\newblock {\em IEEE Transactions on Industrial Informatics}, 9(1):427--438,
  2013.

\bibitem{Doerfler2013}
F.~D{\"o}rfler, M.~Chertkov, and F.~Bullo.
\newblock Synchronization in complex oscillator networks and smart grids.
\newblock {\em Proceedings of the National Academy of Sciences},
  110(6):2005--2010, 2013.

\bibitem{8344761}
S.~{Fattahi}, G.~{Fazelnia}, J.~{Lavaei}, and M.~{Arcak}.
\newblock Transformation of optimal centralized controllers into near-globally
  optimal static distributed controllers.
\newblock {\em IEEE Transactions on Automatic Control}, 64(1):66--80, 2019.

\bibitem{HAESAERT2018306}
S.~Haesaert, S.~Weiland, and C.~W. Scherer.
\newblock A separation theorem for guaranteed {$H_2$} performance through
  matrix inequalities.
\newblock {\em Automatica}, 96:306--313, 2018.

\bibitem{han_observer}
W.~{Han}, H.~L. {Trentelman}, Z.~{Wang}, and Y.~{Shen}.
\newblock A simple approach to distributed observer design for linear systems.
\newblock {\em IEEE Transactions on Automatic Control}, 64(1):329--336, 2019.

\bibitem{JIAO2018154}
J.~Jiao, H.~L. Trentelman, and M.~K. Camlibel.
\newblock A suboptimality approach to distributed {$H_2$} optimal control.
\newblock {\em IFAC-PapersOnLine}, 51(23):154--159, 2018.
\newblock 7th IFAC Workshop on Distributed Estimation and Control in Networked
  Systems NECSYS 2018.

\bibitem{Jiao2018}
J.~{Jiao}, H.~L. {Trentelman}, and M.~K. {Camlibel}.
\newblock A suboptimality approach to distributed linear quadratic optimal
  control.
\newblock {\em IEEE Transactions on Automatic Control}, to appear, 2019.

\bibitem{Jiao2019local}
J.~{Jiao}, H.~L. {Trentelman}, and M.~K. {Camlibel}.
\newblock Distributed linear quadratic optimal control: compute locally and act
  globally.
\newblock {\em IEEE Control Systems Letters}, 4(1):67--72, 2020.

\bibitem{HiddeJan2018}
H.~{Jongsma}, H.~L. {Trentelman}, and M.~K. {Camlibel}.
\newblock Model reduction of networked multiagent systems by cycle removal.
\newblock {\em IEEE Transactions on Automatic Control}, 63(3):657--671, 2018.

\bibitem{Zhongkui2014}
Z.~Li and Z.~Duan.
\newblock {\em Cooperative Control of Multi-Agent Systems: A Consensus Region
  Approach}.
\newblock CRC Press, 2014.

\bibitem{LI2011797}
Z.~Li, Z.~Duan, and G.~Chen.
\newblock On {$H_{\infty}$} and {$H_2$} performance regions of multi-agent
  systems.
\newblock {\em Automatica}, 47(4):797--803, 2011.

\bibitem{Lin2013}
F.~{Lin}, M.~{Fardad}, and M.~R. {Jovanović}.
\newblock Design of optimal sparse feedback gains via the alternating direction
  method of multipliers.
\newblock {\em IEEE Transactions on Automatic Control}, 58(9):2426--2431, 2013.

\bibitem{6767074}
N.~Monshizadeh, H.~L. Trentelman, and M.~K. Camlibel.
\newblock Projection-based model reduction of multi-agent systems using graph
  partitions.
\newblock {\em IEEE Transactions on Control of Network Systems}, 1(2):145--154,
  2014.

\bibitem{kristian2014}
K.~H. Movric and F.~L. Lewis.
\newblock Cooperative optimal control for multi-agent systems on directed graph
  topologies.
\newblock {\em IEEE Transactions on Automatic Control}, 59(3):769--774, 2014.

\bibitem{nguyen_2017}
D.~H. Nguyen.
\newblock Reduced-order distributed consensus controller design via edge
  dynamics.
\newblock {\em IEEE Transactions on Automatic Control}, 62(1):475--480, 2017.

\bibitem{Oh2015}
K.-K. Oh, M.-C. Park, and H.-S. Ahn.
\newblock A survey of multi-agent formation control.
\newblock {\em Automatica}, 53:424--440, 2015.

\bibitem{Olfati-Saber2004}
R.~Olfati-Saber and R.~M. Murray.
\newblock Consensus problems in networks of agents with switching topology and
  time-delays.
\newblock {\em IEEE Transactions on Automatic Control}, 49(9):1520--1533, 2004.

\bibitem{Rotkowitz2006}
M.~Rotkowitz and S.~Lall.
\newblock A characterization of convex problems in decentralized control.
\newblock {\em IEEE Transactions on Automatic Control}, 51(2):274--286, 2006.

\bibitem{Scherer1997}
C.~{Scherer}, P.~{Gahinet}, and M.~{Chilali}.
\newblock Multiobjective output-feedback control via {LMI} optimization.
\newblock {\em IEEE Transactions on Automatic Control}, 42(7):896--911, 1997.

\bibitem{Scherer2000}
C.~Scherer and S.~Weiland.
\newblock {\em Linear Matrix Inequalities in Control (Lecture Notes)}.
\newblock Delft: The Netherlands, 2000.
\newblock [Online]. Available:
  https://www.imng.uni-stuttgart.de/mst/files/LectureNotes.pdf.

\bibitem{algebraic_1997}
R.~E. Skelton, T.~Iwasaki, and D.~E. Grigoriadis.
\newblock {\em A Unified Algebraic Approach to Control Design}.
\newblock Boca Raton, FL, USA: CRC Press, 1997.

\bibitem{harry_book}
H.~L. Trentelman, A.~A. Stoorvogel, and M.~Hautus.
\newblock {\em Control Theory for Linear Systems}.
\newblock Springer Verlag, 2001.

\bibitem{harry_2013}
H.~L. Trentelman, K.~Takaba, and N.~Monshizadeh.
\newblock Robust synchronization of uncertain linear multi-agent systems.
\newblock {\em IEEE Transactions on Automatic Control}, 58(6):1511--1523, 2013.

\bibitem{Wang2014}
J.~{Wang}, Z.~{Duan}, Z.~{Li}, and G.~{Wen}.
\newblock Distributed {$H_{\infty}$} and {$H_2$} consensus control in directed
  networks.
\newblock {\em IET Control Theory Applications}, 8(3):193--201, 2014.

\bibitem{WIELAND20111068}
P.~Wieland, R.~Sepulchre, and F.~Allgöwer.
\newblock An internal model principle is necessary and sufficient for linear
  output synchronization.
\newblock {\em Automatica}, 47(5):1068--1074, 2011.

\bibitem{YANG2019120}
Q.~Yang, Z.~Sun, M.~Cao, H.~Fang, and J.~Chen.
\newblock Stress-matrix-based formation scaling control.
\newblock {\em Automatica}, 101:120--127, 2019.

\bibitem{Fan2016}
F.~{Zhang}, H.~L. {Trentelman}, and J.~M.~A. {Scherpen}.
\newblock Dynamic feedback synchronization of {Lur'e} networks via incremental
  sector boundedness.
\newblock {\em IEEE Transactions on Automatic Control}, 61(9):2579--2584, 2016.

\bibitem{huaguang_zhang2015}
H.~Zhang, T.~Feng, G.~H. Yang, and H.~Liang.
\newblock Distributed cooperative optimal control for multiagent systems on
  directed graphs: an inverse optimal approach.
\newblock {\em IEEE Transactions on Cybernetics}, 45(7):1315--1326, 2015.

\end{thebibliography}
\end{document}